\newtheorem{corollary}{Corollary}[section]
\newtheorem{lemma}[corollary]{Lemma}
\newtheorem{proposition}[corollary]{Proposition}
\newtheorem{remark}[corollary]{Remark}
\newtheorem{theorem}[corollary]{Theorem}
\newfont{\sBlackboard}{msbm10 scaled 900}
\newcommand{\mylabel}[1]{\label{#1}
            \ifx\undefined\stillediting
            \else \fbox{$#1$}\fi }
\newcommand{\BE}{\begin{equation}}
\newcommand{\EEQ}{\end{equation}}
\newcommand{\rfb}[1]{\mbox{\rm
   (\ref{#1})}\ifx\undefined\stillediting\else:\fbox{$#1$}\fi}
\newfont{\Blackboard}{msbm10 scaled 1200}
\newfont{\roma}{cmr10 scaled 1200}
\def\CC{\rm \hbox{C\kern-.56em\raise.4ex
         \hbox{$\scriptscriptstyle |$}\kern+0.5 em }}
\newcommand{\mm}    {{\hbox{\hskip 0.5pt}}}
\newcommand{\bluff} {{\hbox{\raise 15pt \hbox{\mm}}}}
\def\section{\@startsection {section}{1}{\z@}{-3.5ex plus -1ex minus
    -.2ex}{2.3ex plus .2ex}{\large\bf}}
\def\be{\begin{equation}}
\def\ee{\end{equation}}
\begin{document}
\thispagestyle{empty}
\title{
Carleman Estimates and
null controllability of coupled degenerate systems}
\author{E. M. Ait Ben Hassi, \,F. Ammar Khodja, \, A. Hajjaj, \, L. Maniar}
\date{\today}
\subjclass[2000]{35K05, 35K65, 47D06, 93C20}
\begin{abstract} In this paper, we  study  the null
controllability  of  weakly degenerate
 coupled parabolic systems with two different diffusion coefficients and  one control force. To obtain this aim, we develop first new  global Carleman estimates for degenerate parabolic equations with  weight functions different from the ones of \cite{bouss},
  \cite{CanMarVan2008}
  and \cite{Martinez1}.
\end{abstract}
 \keywords{semigroups, Carleman estimates,
degenerate, parabolic equations, coupled systems, control force, observability inequality,
null controllability.}
\maketitle
\section{Introduction}
 This paper is concerned with the null
controllability for the coupled degenerate
parabolic systems
\begin{align}
&u_{t}
-(x^{\alpha_1}u_x)_x + b_{11}(t,x) u+b_{12}(t,x) v=h(t,x) 1_{\omega} ,&  (t,x)\in
\times(0,1),\label{eq:1}
\\
&v_{t} -(x^{\alpha_2}v_x)_x +b_{22}(t,x) v+b_{21}(t,x) u=0,&  (t,x)\in
\times(0,1),
\\
 &u(t, 0) = u(t, 1) = \,\,
v(t, 0) = v(t, 1) = 0, & t\in
(0,T),\\
  &u(0, x) =u_0(x),\,\,
v(0, x) = v_0( x) ,& x\in (0,1), \label{eq:2}
\end{align}
where $\omega =(a,b)$ is an open subset of $(0,1)$,  $h
\in L^2((0, T)\times[0, 1))$, $(u_0,\, v_0) \in L^2 (0,1)\times L^2 (0,1) $,
$(\alpha_1,\alpha_2)\in(0,1)^2$
and $b_{ij}\in L^\infty((0,T)\times(0,1)), \,\,  i,j=1,2$.

Controllability properties of nondegenerate parabolic equations have been widely studied, see
\cite{Cab_Men_Zua}, \cite{DeT_Zua}, \cite{FaPuZu}, \cite{FaRu1}, \cite{FaRu2}, \cite{Cara}, \cite{Cara_Zua}, \cite{Leb_Rob}, \cite{Lo_Zh_Zu}, \cite{Mi_Zu}, \cite{Rus}, \cite{Tat}, using several  techniques in particular the Carleman estimates. In \cite{bouss}, \cite{CanMarVan2008}, \cite{Martinez1} new Carleman estimates were developed for degenerate parabolic equations and used  to show observability inequalities of the adjoint degenerate problems and then obtain the null controllability. Recently, in \cite{CaToYa} Cannarsa et al. established a local Carleman estimate and deduced unique continuation and boundary approximate controllability for weakly degenerate equations.

The null controllability of coupled parabolic systems  was  studied  for example in
\cite{Khoudja1}, \cite{Khoudja2}, \cite{De0}, \cite{Burgos1}, \cite{Burgos2}, \cite{Gu} in the nondegenerate case.
In \cite{Liu}, Liu et al. considered parabolic cascade systems, $b_{12}=0$, with degeneracy in only
one equation, using the nondegenerate Carleman estimate of Fursikov and Imanuvilov
\cite{Fursikov} and an approximation argument as in \cite{CaMaVa}. In
\cite{Ca-Te}, Cannarsa and De Teresa studied the null controllability of cascade degenerate linear systems with the same diffusion coefficient,
i.e.,   $\alpha_1=\alpha_2$, and with the particular coupling term $b_{21}=1_O$ for some open set $O\subset (0,1)$. 
In \cite{bahm}, we studied the null controllability for degenerate cascade systems with general coupling terms and   two different diffusion coefficients. We used a  Carleman estimate from \cite{bouss}, and chose carefully appropriate parameters in the weight functions
$\varphi_1(t,x)= \frac{\lambda_1(x^{2-{\alpha_1}}-d_1)}{t^4(T-t)^4}$ and $\varphi_2(t,x)= \frac{\lambda_2(x^{2-{\alpha_2}}-d_2)}{t^4(T-t)^4}$ to obtain the inequality $e^{s\varphi_1}\leq C e^{s\varphi_2}$ to absorb the coupling term.

For general degenerate systems \eqref{eq:1}-\eqref{eq:2},  we need the uniform equivalence $e^{s\varphi_1} \equiv e^{s\varphi_2}$.  But this  occurs if and only if $\alpha_1=\alpha_2$. To overcome this problem we propose in this paper a common  weight function
$\varphi(t,x)= \frac{\lambda(x^{2-{\beta}}-d)}{t^k(T-t)^k}$ for some $\beta$ in terms of $\alpha_1$ and $\alpha_2$.
Then, the first step in this paper is  to  show new Carleman estimates for the following degenerate parabolic equation
 \begin{align}
 & y_{t} -(x^{\alpha}y_x)_x = f(t,x), \,\,\,\,\,
 & (t,x)\in (0,T)\times(0,1), \label{1eq1} \\
& y(t,0)=y(t,1)=0, \,\,
& t\in(0,T), \\
& y(0,x)=y_0,
&  x\in (0,1), \label{1eq2}
 \end{align}
 with the weight function $\varphi(t,x)= \frac{\lambda(x^{2-{\beta}}-d)}{t^k(T-t)^k}$ with
 $d,\lambda$ and $k$ constants to be specified later. To prove our Carleman estimates,  we need to show the following fundamental Hardy-Poincar\'e inequality
\begin{align}\label{Hardy1}
 \int_0^1x^{\gamma-2}v^2dx\leq C_\gamma \int_0^1x^\gamma v_x^2dx \quad\quad \mbox{where}\,\,\,\,
C_\gamma=\frac{4}{(1-\gamma)^2}
\end{align}
for $\gamma <1$, and $v$ satisfying $v(0)=0$ and $\int_0^1x^\gamma v_x^2dx <+\infty$. This result was proved in \cite{bouss}, \cite{CanMarVan2008} and \cite{Martinez1} for $0<\gamma<2, \gamma\neq 1$. But, for our Carleman estimates we need this inequality for negative $\gamma$, see Lemma  \ref{Hardy2}.
 This will allow us to deduce Carleman estimates for the adjoint coupled degenerate system
\begin{align}
& U_{t} -(x^{\alpha_1}U_x)_x + b_{11}(t,x) U+b_{21}(t,x) V=0, &  (t,x)\in (0,T)\times(0,1), \label{adjfirst}\\
& V_{t}-(x^{\alpha_2}V_x)_x +b_{22}(t,x) V+ b_{12}(t,x) U=0, & (t,x)\in (0,T)\times(0,1),\label{21}\\
& U(t,1)=U(t,0)=\, V(t,1)=V(t,0)=0 ,& t\in (0,T),\label{23}\\
& U(0,x)=U_0(x), V(0,x)=V_0(x), & x\in (0,1)\label{adjend},
\end{align}
and then its observability inequality. Using a standard argument, we obtain the null controllability of \eqref{eq:1}-\eqref{eq:2}.  By a linearization argument and fixed point, see for example \cite{bahm}, \cite{bouss}, \cite{Can-Gen-2006}, \cite{Zuazua99} one can show easily the null controllability of semilinear  degenerate coupled systems.

 This paper is organized as follows. Section 2 is devoted to the well-posedness of the coupled degenerate systems. In section 3, we establish our new Carleman estimates for  degenerate parabolic equations and deduce similar estimates for the coupled degenerate  systems.
 In section 4, we deduce observability inequality and null controllability results. In appendix, we give summarized proofs of Caccioppoli and Hardy-Poincar\'e  inequalities.

\section{Well-posedness}
In order to study the well-posedness of the system
\eqref{eq:1}-\eqref{eq:2}, we introduce the weighted spaces
 $$
 H_{\alpha_i}^1(0, 1)
:=\Big\{ u \in L^2(0, 1) :\, u \,\mbox{ is abs. continuous in}\,
[0, 1],\,\, x^{\alpha_i/2} u_x \in L^2(0, 1)\, \mbox{and}\, u(0)=u(1) = 0\Big \}
$$
with the norm
$\|u\|^2_{ H^1_{\alpha_i}(0, 1)} := \|u\|^2_{L^2(0,1)} +
\|x^{\alpha_i/2}u_x\|^2_{ L^2(0,1)}$  and
 $$H^2_{\alpha_i} (0, 1):=\Big\{ u \in H^1_{\alpha_i}(0, 1) \,: \, x^{\alpha_i}u_x
\in H^1(0, 1)\Big\}$$
 with the norm $$\|u\|^2_{H^2_{\alpha_i}(0, 1)} := \|u\|^2 _{H^1_{\alpha_i}(0, 1)}
+ \|(x^{\alpha_i}u_x)_x\|^2_{ L^2(0,1)}.$$
We define the operator $(A_i,D(A_i))$ by
$$
A_iu := (x^{\alpha_i}u_x)_x ,\, \, \,  u \in D(A_i) = H^2_{\alpha_i}(0, 1), \; i=1,2.
$$
 We recall the following properties of $(A_i,D(A_i))$.
\begin{proposition} (\cite {cmp}, \cite{CaMaVa}).  For $i=1,2$, the operator $A_i : D(A_i) \longrightarrow L^2(0, 1)$  is closed, 
self-adjoint, negative and   with dense domain.
\end{proposition}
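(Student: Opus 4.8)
The plan is to fix $i$, abbreviate $\alpha=\alpha_i$ and $A=A_i$, and handle the four assertions in the order \emph{dense domain}, \emph{symmetry}, \emph{negativity}, \emph{self-adjointness}, after which closedness is automatic. The organizing object is the symmetric form $a(u,v)=\int_0^1 x^\alpha u_x v_x\,dx$ on the form domain $V:=H^1_{\alpha_i}(0,1)$, and I will recover $A$ from $a$ by a Lax--Milgram and surjectivity argument. Density of $D(A)$ is immediate, since $C_c^\infty(0,1)\subset H^2_{\alpha_i}(0,1)=D(A)$ and $C_c^\infty(0,1)$ is dense in $L^2(0,1)$.

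For symmetry and negativity, take $u,v\in D(A)$. Because $x^\alpha u_x\in H^1(0,1)\hookrightarrow C[0,1]$, the flux $x^\alpha u_x$ has finite traces at both endpoints, so integration by parts is legitimate and gives
$$\langle A u,v\rangle=\big[x^\alpha u_x\,v\big]_0^1-\int_0^1 x^\alpha u_x v_x\,dx.$$
The boundary contributions vanish because every element of $V$ is absolutely continuous with $v(0)=v(1)=0$; hence $\langle Au,v\rangle=-a(u,v)=\langle u,Av\rangle$, which is symmetry, and $\langle Au,u\rangle=-\int_0^1 x^\alpha u_x^2\,dx\le 0$, which is negativity. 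It is worth noting that the weak degeneracy $0<\alpha<1$ is precisely what makes $x^\alpha u_x$ continuous up to $x=0$, so the endpoint term is controlled by $v(0)=0$ alone and no separate vanishing-flux hypothesis is required.

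For self-adjointness I reduce to surjectivity of $\lambda-A$ for a single $\lambda>0$. The form $a_\lambda(u,v):=\lambda\langle u,v\rangle+a(u,v)$ is bounded and coercive on $V$, with $a_\lambda(u,u)\ge\min(\lambda,1)\,\|u\|_{V}^2$, so Lax--Milgram yields, for each $f\in L^2(0,1)$, a unique $u\in V$ satisfying $a_\lambda(u,v)=\langle f,v\rangle$ for all $v\in V$. Testing against $v\in C_c^\infty(0,1)$ shows $(x^\alpha u_x)_x=\lambda u-f\in L^2(0,1)$ in the distributional sense, whence $x^\alpha u_x\in H^1(0,1)$ and $u\in H^2_{\alpha_i}(0,1)=D(A)$ with $\lambda u-Au=f$; thus $\mathrm{Ran}(\lambda-A)=L^2(0,1)$. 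A symmetric negative operator with this surjectivity is self-adjoint: for $g\in D(A^*)$, choose $u\in D(A)$ with $(\lambda-A)u=(\lambda-A^*)g$, so that $g-u\in\ker(\lambda-A^*)=\mathrm{Ran}(\lambda-A)^\perp=\{0\}$, giving $g=u\in D(A)$ and $A=A^*$. Closedness then follows since every self-adjoint operator is closed. The one genuinely technical step, and the place I expect the main difficulty, is the elliptic regularity that promotes the variational solution $u\in V$ to $u\in H^2_{\alpha_i}(0,1)$ at the degenerate endpoint; here the structure of the weighted spaces, underpinned by the Hardy--Poincar\'e inequality \eqref{Hardy1} with $\gamma=\alpha$, guarantees that $V$ is the correct form domain, that the Dirichlet conditions are genuinely attained, and that the identification $D(A)=H^2_{\alpha_i}(0,1)$ carries no hidden boundary obstruction.
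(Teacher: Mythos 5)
Your proof is correct, but note that the paper itself gives no argument here: the proposition is quoted verbatim from \cite{cmp} and \cite{CaMaVa}, so there is no internal proof to compare against. Your Lax--Milgram/form-method route is essentially the standard argument in those references (symmetric form $a(u,v)=\int_0^1 x^{\alpha}u_xv_x\,dx$ on $V=H^1_{\alpha_i}(0,1)$, surjectivity of $\lambda-A$, then self-adjointness of a symmetric operator with full range), and every step checks out: the density claim via $C_c^\infty(0,1)\subset H^2_{\alpha_i}(0,1)$, the integration by parts with vanishing boundary terms, and the one-dimensional regularity step $x^{\alpha}u_x\in L^2$ with $(x^{\alpha}u_x)_x\in L^2$, hence $x^{\alpha}u_x\in H^1(0,1)$, which instantly yields $u\in D(A)$. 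Two refinements would tighten it. First, Lax--Milgram requires $V$ to be complete, and this is precisely where $\alpha_i<1$ enters: for $u\in V$ one has $\int_0^1|u_x|\,dx\le \left(\int_0^1 x^{-\alpha_i}dx\right)^{1/2}\|x^{\alpha_i/2}u_x\|_{L^2}<\infty$, so Cauchy sequences converge to functions that are absolutely continuous on all of $[0,1]$ with the Dirichlet traces preserved; you use this fact implicitly. Second, your attribution is slightly off: the continuity of the flux $x^{\alpha}u_x$ up to $x=0$ is not a consequence of weak degeneracy but is built into the definition of $D(A)=H^2_{\alpha_i}$, since $x^{\alpha_i}u_x\in H^1(0,1)\hookrightarrow C[0,1]$ for any $\alpha$; what $0<\alpha_i<1$ actually buys is the absolute continuity of elements of $V$ up to the degenerate endpoint, making the condition $v(0)=0$ meaningful (for $\alpha\ge 1$ the Dirichlet condition at $0$ must be replaced by a vanishing-flux condition, and the domain characterization changes). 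Relatedly, the Hardy--Poincar\'e inequality \eqref{Hardy1} is not needed anywhere in your argument, since coercivity of $a_\lambda$ comes entirely from the $\lambda\|u\|_{L^2}^2$ term, and the ``elliptic regularity'' you flag as the expected main difficulty is in fact the one-line distributional computation you already carried out --- in one dimension there is no hidden obstruction.
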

 In the Hilbert space  $\mathbb{H}:= L^2(0,1) \times L^2(0, 1) $,
 the  system \eqref{eq:1}-\eqref{eq:2} can be
transformed in the following Cauchy problem
$$
(CP)\quad
\begin{cases} X'(t) =\mathcal{A}X(t) +B(t)X(t)+G(t),\\
X(0)=\left(\begin{smallmatrix}
u_0\\
v_0
\end{smallmatrix}\right),
\end{cases}
$$ where $X(t)=\left(\begin{smallmatrix}
u(t)\\
v(t)
\end{smallmatrix}
\right),$ $\mathcal{A}=\left(\begin{matrix}
A_1&0\\
0&A_2
\end{matrix}
\right) $, $D(\mathcal{A})=D(A_1)\times D(A_2), \,\, G(t)=\left(\begin{smallmatrix}
h(t,x) 1_\omega\\
0 \end{smallmatrix}
\right)$, 
and 
$$
B(t)=\left(\begin{matrix}
M_{b_{11}(t)}&M_{b_{12}(t)}\\
M_{b_{21}(t)}&M_{b_{22}(t)}
\end{matrix}
\right), \,\, \mbox{where}\,\, M_{b_{ij}(t)}u=b_{ij}(t)u.
$$
As the operator  $\mathcal{A}$
 is diagonal  and since $B(t)$ is a bounded perturbation, the following wellposedness and regularity
results hold.
\begin{proposition}\label{estimsemigroup}
(i) \, The operator $\mathcal{A}$ generates a contraction strongly
continuous semigroup $(T(t))_{t\geq0}.$
\\
(ii)\,  For all $h\in L^2((0,T)\times (0,1))$ and $(u_0,v_0)\in L^2(0,1)\times L^2(0,1)$ there exists a unique mild solution $(u,v)\in X_T:=C\left( [0,T],L^2(0,1)\times L^2(0,1) \right)
\cap L^2\left(0,T;H^1_{\alpha_1}\times H^1_{\alpha_2}\right)$ of \eqref{eq:1}-\eqref{eq:2} satisfying
\begin{align}\label{estiminitialdata}
\sup_{[0,T]}\|(u,v)(t)\|^2_{L^2\times L^2} + \int_0^T  \| (x^{\frac{\alpha_1}{2}}u_x ,x^{\frac{\alpha_2}{2}}v_x) \|^2_{L^2} \, dt
\nonumber \\
\leq C_T\left(\|(u_0,v_0)\|^2_{L^2\times L^2}+
\|h\|^2_{L^2((0,T)\times (0,1))}\right)
\end{align}
for a constant $C_T>0$.
Morover, if $(u_0,v_0)\in H^1_{\alpha_1}\times H^1_{\alpha_2}$ then,  $(u,v)\in Y_T:=C\left([0,T],H^1_{\alpha_1}\times H^1_{\alpha_2}\right)
\cap H^1\left(0,T;L^2(0,1)\times L^2(0,1)\right)\cap L^2\left(0,T;H^2_{\alpha_1}\times H^2_{\alpha_2}\right )$ and
$$
\sup_{[0,T]}\|(u,v)(t)\|^2_{H^1_{\alpha_1}\times H^1_{\alpha_2}} + \int_0^T \left( \| (u_t,v_t) \|^2_{L^2} + \| ((x^{\alpha_1}u_x)_x,(x^{\alpha_2}v_x)_x) \|^2_{L^2} \right) \, dt $$
$$
\leq C_T\left(\|(u_0,v_0)\|^2_{H^1_{\alpha_1}\times H^1_{\alpha_2}}+
\|h\|^2_{L^2((0,T)\times (0,1))}\right)
$$
for a constant $C_T>0$.
\end{proposition}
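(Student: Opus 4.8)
My plan is to read $(CP)$ as a bounded perturbation of the diagonal Cauchy problem governed by $\mathcal{A}$, establish generation and well-posedness by semigroup theory, and then obtain the two a priori estimates by energy methods combined with a density argument.

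For (i) I would argue as follows. By the preceding proposition each $A_i$ is self-adjoint, nonpositive and densely defined on $L^2(0,1)$, so by the spectral theorem (equivalently by Lumer--Phillips, since $A_i$ is maximal dissipative) it generates a strongly continuous contraction semigroup $(e^{tA_i})_{t\ge0}$. The diagonal operator $\mathcal{A}=\mathrm{diag}(A_1,A_2)$ on $\mathbb{H}=L^2(0,1)\times L^2(0,1)$, with domain $D(A_1)\times D(A_2)$, then inherits self-adjointness and nonpositivity, and hence generates the contraction semigroup $T(t)=\mathrm{diag}(e^{tA_1},e^{tA_2})$. For the well-posedness in (ii) I would note that $B(\cdot)$ is a family of bounded operators with $\sup_{t\in[0,T]}\|B(t)\|_{\mathcal{L}(\mathbb{H})}\le\max_{i,j}\|b_{ij}\|_{L^\infty}<\infty$ and that $G\in L^2(0,T;\mathbb{H})$, so a mild solution is a fixed point of
\[
(\Phi X)(t)=T(t)X(0)+\int_0^t T(t-s)\bigl(B(s)X(s)+G(s)\bigr)\,ds
\]
on $C([0,T],\mathbb{H})$. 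Since $T(t)$ is a contraction and $B$ is uniformly bounded, $\Phi$ is a contraction on $C([0,t_0],\mathbb{H})$ for small $t_0$ (or on $C([0,T],\mathbb{H})$ with an exponentially weighted norm), and Banach's fixed point theorem gives a unique mild solution $(u,v)\in C([0,T],\mathbb{H})$.

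To prove the two estimates I would first establish them for strong solutions (data in $D(\mathcal{A})$, respectively in $H^1_{\alpha_1}\times H^1_{\alpha_2}$) and then pass to the limit by density. For \eqref{estiminitialdata} I would multiply \eqref{eq:1} by $u$ and the $v$-equation by $v$, integrate over $(0,1)$, and integrate by parts to reach
\[
\tfrac12\tfrac{d}{dt}\|(u,v)\|^2_{L^2\times L^2}+\int_0^1\bigl(x^{\alpha_1}u_x^2+x^{\alpha_2}v_x^2\bigr)\,dx=-\int_0^1\bigl(b_{11}u^2+(b_{12}+b_{21})uv+b_{22}v^2\bigr)\,dx+\int_\omega hu\,dx .
\]
Here the boundary contributions $\bigl[x^{\alpha_i}(\cdot)_x(\cdot)\bigr]_0^1$ vanish, at $x=1$ by the Dirichlet conditions and at $x=0$ because $x^{\alpha_i}u_x\in H^1(0,1)$ is continuous while $u(0)=0$. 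Estimating the right-hand side by $\max_{i,j}\|b_{ij}\|_\infty$ together with Young's inequality (absorbing $\int_\omega hu$ into $\tfrac12\|u\|^2+\tfrac12\|h\|^2$), integrating in time and applying Gronwall's inequality yields \eqref{estiminitialdata}.

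For the higher regularity, with $(u_0,v_0)\in H^1_{\alpha_1}\times H^1_{\alpha_2}$, I would instead multiply the two equations by $u_t$ and $v_t$; integration by parts converts $\int_0^1(x^{\alpha_1}u_x)_x\,u_t\,dx$ into $-\tfrac12\tfrac{d}{dt}\int_0^1 x^{\alpha_1}u_x^2\,dx$ (the degenerate boundary term again vanishing), so that after Young's inequality on the lower-order and source terms and Gronwall's inequality one controls $\int_0^T\|(u_t,v_t)\|^2_{L^2}\,dt$ and $\sup_{[0,T]}\|(u,v)\|^2_{H^1_{\alpha_1}\times H^1_{\alpha_2}}$. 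The $H^2_{\alpha_i}$ bound then follows by reading the second-order term off the equation, e.g. $(x^{\alpha_1}u_x)_x=u_t+b_{11}u+b_{12}v-h1_\omega\in L^2$, which completes the $Y_T$-estimate. The only genuinely delicate points will be justifying that the degenerate boundary terms at $x=0$ vanish and that the regularization/density passage is legitimate; everything else is the routine semigroup-perturbation and Gronwall machinery.
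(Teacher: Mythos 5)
Your proposal is correct and follows exactly the route the paper itself invokes: the paper offers no written proof of this proposition, merely observing that $\mathcal{A}$ is diagonal (each $A_i$ being self-adjoint, negative and densely defined, so a generator of a contraction semigroup) and that $B(t)$ is a bounded perturbation, so the well-posedness and the two energy estimates follow from standard semigroup theory. Your write-up simply supplies the standard details left implicit there --- Lumer--Phillips for (i), the fixed-point construction of the mild solution, and the $u,v$ and $u_t,v_t$ energy multipliers with Gronwall and density, with the degenerate boundary terms at $x=0$ handled exactly as the paper does later in its proof of the observability inequality.
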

%
%
                        \section{Carleman estimates}
%
%
In this section we prove new Carleman estimates for the adjoint system \eqref{adjfirst}-\eqref{adjend}.
For this, let
 $\omega':=(a',b')\Subset \omega$
 and let
us introduce the weight functions :
$\varphi(t,x):=\Theta(t)\psi(x)$;
\,\, $\Theta(t):=\dfrac{1}{t^k(T-t)^k};$ \,\, $\psi(x):=\lambda\left(x^{2-\beta}-d\right)$; \quad
 $\Phi(t,x)=\Psi(x)\Theta(t)$; \quad $\Psi (x):=\left(e^{\rho \sigma(x)}-e^{2\rho||\sigma||_\infty}\right)$; \quad
 $\phi(t,x)=e^{\rho\sigma(x)}\Theta(t)$;
where  $\sigma$ is a function in $C^2([a',1])$
  satisfying $\sigma(x)>0$ in $(a',1)$, $\sigma(a')=\sigma(1)=0$ and $\sigma_x(x)\neq 0$ in
  $[a',1]\backslash\omega_0$  for some open $\omega_0\Subset (a',1)$ and the parameters $d$, $\rho$, $\lambda$ and $k$ are chosen such that $d\geq 5$; $\rho > \frac{4ln 2}{||\sigma||_\infty}$, $\frac{e^{2\rho||\sigma||_\infty}}{d-1} < \lambda < \frac{4}{3d}(e^{2\rho||\sigma||_\infty}-e^{\rho||\sigma||_\infty})$ and $k\geq 4$.
  \begin{remark}\label{rk1}
  \begin{itemize}
  \item
  These weight functions are independent of the diffusion coefficient. This play a crucial role to study coupled system of non cascade form.
  \item
  The existence of the function $\sigma$ was proved for example in \cite{Fursikov} using Morse functions. But in 1-dimension one can show this easily using cut-off functions.
  \item
  If $d\geq 5$ and $\rho > \frac{4ln 2}{||\sigma||_\infty}$ then the interval $\left]\frac{e^{2\rho||\sigma||_\infty}}{d-1}\, , \frac{4}{3d}(e^{2\rho||\sigma||_\infty}-e^{\rho||\sigma||_\infty})\right[$ is not empty. We can then choose $\lambda$ in this interval.
   \item
   For this choice of the parameters $d,\, \rho$ and $\lambda $ the weight functions $\varphi$ and $\Phi$
   satisfy the following inequalities which are needed in the sequel
   \begin{align}\label{comparaison phi}
 \frac43 \Phi < \varphi < \Phi  \,\, \mbox{on } (0,T)\times(0,1).
 \end{align}
 \item
 For nondegenerate problems one needs the following estimates see e.g. \cite{Fursikov}
 \begin{align}\label{majorTheta1}
 \lim_{t\rightarrow O^+} \Theta(t)=\lim_{t\rightarrow T^-} \Theta(t)= +\infty,\quad
 \Theta(t)\geq c_1, \quad |\dot{\Theta}|\leq c_2 \Theta^2, \quad |\ddot{\Theta}|\leq c_3 \Theta^3.
 \end{align}
 and this is satisfied for all $k\geq 1$ with $c_1=(2/T)^{2k}$, $c_2=kT(T/2)^{2(k-1)}$, $c_3=k(k+1)T^2(T/2)^{4(k-1)}$.
 \item
  For the degenerate case one needs in addition the estimate
  \begin{align}\label{majorTheta2}
 |\ddot{\Theta}|\leq c_4 \Theta^{2}.
 \end{align}
 which is satisfied for all $k\geq 2$ with \textcolor[rgb]{0.98,0.00,0.00}{$c_4=k(k+1)T^2(T/2)^{k-4}$}.
 \end{itemize}
\end{remark}

We begin by proving first a new Carleman estimate for the problem \eqref{1eq1}-\eqref{1eq2} with one equation.
%
%
\begin{theorem}\label{carl_1eq}
Let $T>0$ and suppose that $y_0\in H^1_{\alpha}$. Then, for all $\beta\in [\alpha,1)$
there exist two positive constants $C$ and $s_0$ such that every solution $y$ of
\eqref{1eq1}-\eqref{1eq2} satisfies for all $s\geq s_0$
\begin{align}
& \int_0^T\int_0^1\left( s\Theta(t)  x^{2\alpha-\beta}y_x^2  + s^3 \Theta^3(t)
x^{2+2\alpha-3\beta}y^2 \right)e^{2s\varphi (t,x)}\,dx dt \notag \\
& \vspace*{2cm}\leq C\left( \int_0^T\int_0^1 f^2(t,x) e^{2s\varphi (t,x)}\,dxdt+ \int_0^T s\Theta(t)y_x^2(t,1)e^{2s\varphi (t,1)}dt\right).\quad \quad \label{carl1equ}
\end{align}
\end{theorem}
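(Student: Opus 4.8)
The plan is to run the classical conjugation/duality scheme for parabolic Carleman estimates, adapted to the degeneracy at $x=0$. First I would set $w:=e^{s\varphi}y$ and conjugate $Py:=y_t-(x^\alpha y_x)_x$, writing $e^{s\varphi}P(e^{-s\varphi}w)=L_s^+w+L_s^-w$ with
$$L_s^+w=-(x^\alpha w_x)_x-s^2x^\alpha\varphi_x^2 w-s\varphi_t w,\qquad L_s^-w=w_t+2s x^\alpha\varphi_x w_x+s(x^\alpha\varphi_x)_x w.$$
Since $\Theta(t)\to+\infty$ as $t\to 0^+,T^-$ while $\psi<0$ (recall $d\ge 5$), the factor $e^{s\varphi}\to 0$, so $w$ and the relevant derivatives vanish at $t=0,T$; together with $w(t,0)=w(t,1)=0$ this makes $L_s^+$ symmetric and $L_s^-$ skew-symmetric on $L^2((0,T)\times(0,1))$. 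From $L_s^+w+L_s^-w=e^{s\varphi}f$ and the algebraic identity $\|e^{s\varphi}f\|^2=\|L_s^+w\|^2+\|L_s^-w\|^2+2\langle L_s^+w,L_s^-w\rangle$ I obtain the basic inequality $2\langle L_s^+w,L_s^-w\rangle\le\int_0^T\int_0^1 f^2 e^{2s\varphi}\,dx\,dt$; the whole task is to bound the cross term below by the left side of \eqref{carl1equ} minus an admissible boundary term.

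Next I would expand $\langle L_s^+w,L_s^-w\rangle$ into its nine pairings and integrate by parts in $x$ and $t$, using $\varphi_x=\lambda(2-\beta)\Theta x^{1-\beta}$, $\varphi_{xx}=\lambda(2-\beta)(1-\beta)\Theta x^{-\beta}$ and $\varphi_t=\dot\Theta\psi$. Two pairings produce the dominant distributed terms. The product of $-s^2x^\alpha\varphi_x^2 w$ with $2sx^\alpha\varphi_x w_x$, after one integration by parts, gives $s^3\int\int(x^{2\alpha}\varphi_x^3)_x w^2$; combined with the zeroth-order $s^3$ piece coming from $-s^2x^\alpha\varphi_x^2 w$ against $s(x^\alpha\varphi_x)_x w$, it yields $s^3\Theta^3\lambda^3(2-\beta)^3(\alpha+2-2\beta)\int\int x^{2+2\alpha-3\beta}w^2$, whose coefficient is strictly positive because $\beta<1$ forces $\alpha+2-2\beta>\alpha>0$. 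The pairing of $-(x^\alpha w_x)_x$ with the first-order part $2sx^\alpha\varphi_x w_x$ (plus the gradient contribution from $-(x^\alpha w_x)_x$ against $s(x^\alpha\varphi_x)_x w$) gives, after integration by parts, the positive term $s\Theta\lambda(2-\beta)(2+\alpha-2\beta)\int\int x^{2\alpha-\beta}w_x^2$. These are exactly the two quantities displayed in the statement.

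The remaining contributions split into time-derivative and lower-order spatial terms, and absorbing them is where the work lies. The pieces carrying $\dot\Theta,\ddot\Theta$ are controlled by $|\dot\Theta|\le c\Theta^2$ and $|\ddot\Theta|\le c\Theta^2$ (valid since $k\ge 2$, cf. \eqref{majorTheta1}--\eqref{majorTheta2}) and, since $\beta\ge\alpha$ makes $x^{\alpha+2-2\beta}\le x^{2\alpha+2-3\beta}$ on $(0,1)$, they are dominated by the $s^3\Theta^3$ term once $s$ is large. The crucial spatial remainder, produced notably by pairing $-(x^\alpha w_x)_x$ with $s(x^\alpha\varphi_x)_x w$, is an integral of the form $\int\int x^{2\alpha-\beta-2}w^2$; to bound it by the gradient term I would apply the Hardy--Poincar\'e inequality \eqref{Hardy1} with $\gamma=2\alpha-\beta$, which satisfies $\gamma<1$ for every admissible $\beta$ but is \emph{negative} precisely when $\beta>2\alpha$. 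This is the main obstacle: the classical references cover only $0<\gamma<2$, so the estimate genuinely relies on the negative-exponent extension recorded in Lemma \ref{Hardy2}, and one must check that the parameters leave a positive fraction of the gradient term after the absorption.

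Finally I would collect the boundary contributions. Those at $t=0,T$ vanish because $e^{2s\varphi}\to 0$ there; those at $x=0$ vanish by combining $w(t,0)=0$, the vanishing weights $x^\alpha\to 0$, and the integrability furnished by the Hardy--Poincar\'e inequality; and those at $x=1$ reduce, via $w(t,1)=0$ and $w_x(t,1)=e^{s\varphi(t,1)}y_x(t,1)$, to a multiple of $\int_0^T s\Theta\,y_x^2(t,1)e^{2s\varphi(t,1)}\,dt$, the admissible observation term. Returning to $y$ through $w_x=e^{s\varphi}(y_x+s\varphi_x y)$ and $(y_x+s\varphi_x y)^2\ge\tfrac12 y_x^2-s^2\varphi_x^2 y^2$, the extra term generated equals exactly half of the surviving $s^3\Theta^3 x^{2+2\alpha-3\beta}e^{2s\varphi}y^2$ term (the powers of $\lambda$ and $(2-\beta)$ cancel), so it is reabsorbed for $s\ge s_0$, which yields \eqref{carl1equ}.
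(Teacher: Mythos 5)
Your proposal is correct and follows essentially the same route as the paper: the same conjugation $w=e^{s\varphi}y$ with the symmetric/skew splitting and the lower bound via the cross term $2\langle L_s^+w,L_s^-w\rangle$, the same dominant terms with coefficients $\lambda^3(2-\beta)^3(2-2\beta+\alpha)$ and $\lambda(2-\beta)(2-2\beta+\alpha)$, the same absorption of the $\dot\Theta,\ddot\Theta$ remainders using $|\dot\Theta|\leq c\Theta^2$, $|\ddot\Theta|\leq c\Theta^2$ and $\beta\geq\alpha$, the same crucial appeal to the negative-exponent Hardy--Poincar\'e inequality of Lemma \ref{Hardy2} with $\gamma=2\alpha-\beta$ (the paper uses it for both the $zz_x$ cross term and the $\ddot\Theta$ term), and the same boundary bookkeeping isolating the observation term at $x=1$. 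The only slip is cosmetic: in the final return to $y$, the extra term generated is $2\lambda^2(2-\beta)^2\,s^3\Theta^3x^{2+2\alpha-3\beta}e^{2s\varphi}y^2$ rather than ``exactly half'' of the $s^3$ term, but since that integral is already bounded by the right-hand side this changes only the constant $C$, not the argument.
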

\begin{proof}
For $s>0$, let us introduce the function $z:=e^{s\varphi}y$.
We have $$L_sz:=z_t+(x^\alpha z_x)_x -2sx^\alpha \varphi_xz_x -s\varphi_tz+s^2x^\alpha \varphi_x^2 z - s(x^\alpha \varphi_x)_x z  =fe^{s\varphi}.$$
Let
\begin{align*}
& L_s^+z :=  (x^\alpha z_x)_x-s\varphi_tz+s^2x^\alpha \varphi_x^2 z, \\
& L_s^-z  :=   z_t-2sx^\alpha \varphi_xz_x - s(x^\alpha \varphi_x)_x z, \\
& \hspace*{.35cm} f_s :=   fe^{s\varphi}.
\end{align*}
We have
$||f_s||_{L^2}^2=||L_s^+z+L_s^-z ||_{L^2}^2=||L_s^+z ||_{L^2}^2+||L_s^-z ||_{L^2}^2+2\langle L_s^+z,L_s^-z\rangle \geq 2\langle L_s^+z,L_s^-z\rangle$. One has $z(0,x)=z(T,x)=z_x(0,x)=z_x(T,x)=0$.
So integrating by parts one obtains
$$\langle L_s^+z,L_s^-z\rangle= -2s^2\int_0^T\int_0^1 x^\alpha \varphi_x \varphi_{tx}z^2 \,dx\,dt
+ s\int_0^T\int_0^1 x^\alpha(x^\alpha \varphi_x)_{xx}zz_x \,dx\,dt $$
$$\frac{s}{2}\int_0^T\int_0^1 \varphi_{tt}z^2 \,dx\,dt
+s\int_0^T\int_0^1 x^\alpha \left[2x^\alpha\varphi_{xx}+\alpha x^{\alpha-1}\varphi_x \right]z_x^2 \,dx\,dt$$
$$+s^3 \int_0^T\int_0^1 x^\alpha \left[2x^\alpha\varphi_{xx}+\alpha x^{\alpha-1}\varphi_x \right]\varphi_x^2z^2
$$
$$ + \int_0^T \left[
x^\alpha z_xz_t
+s^2\Theta \dot{\Theta}\psi\psi_x x^\alpha z^2
- s^3\Theta^3 x^{2\alpha}\psi_x^3 z^2
\right]_{x=0}^{x=1} dt
$$
$$    - \int_0^T \left[ \lambda(2-\beta)s\Theta x^{1-\beta}(x^\alpha z_x)^2
+ \lambda(2-\beta)(1+\alpha-\beta) s\Theta x^{2\alpha-\beta}zz_x \right]_{x=0}^{x=1} dt.
$$
It is easy to check that if $y\in H^2_{\alpha}(0,1)$ then we have also $z\in H^2_{\alpha}(0,1).$ So $x^\alpha z \in H^1(0,1)\subset L^\infty (0,1)$ by the Sobolev imbedding theorem. Then, using the facts that $z(t,0)=z(t,1)=z_t(t,0)=z_t(t,1)=0$ and $x^\alpha z_x$, $x^\alpha$, $\psi, \, \psi_x$ are bounded, we deduce that the first integral with boundary terms vanishes
and $x^{1-\beta}(x^\alpha z_x)^2|_{x=0}=0$.
On the other hand we have $\left[ x^{2\alpha-\beta}zz_x \right]_{x=0}^{x=1}=0$,
in fact it is clear that $x^{2\alpha-\beta}zz_x|_{x=1}=(x^\alpha z_x)z|_{x=1} = 0$
and since $x^\alpha z_x \in  L^\infty (0,1)$ and $z(t,0)=0$
then for each $t\in(0,T)$ we have
\begin{align}\label{mojor z,zx}
 |z_x(t,x)|\leq c x^{-\alpha} \,\, \mbox{and} \,\,\,  |z(t,x)| =|\int_0^x z_x(t,y)dy|\leq c x^{1-\alpha}.
 \end{align}
 Therefore $|x^{2\alpha-\beta}zz_x(t,x)|\leq c x^{1-\beta}$. Consequently, since $\beta<1$ we deduce $x^{2\alpha-\beta}zz_x|_{x=0}=0$.
\\
We have then
$$\underset{J_1}{\underbrace{\lambda^3(2-\beta)^3(2-2\beta+\alpha)\int_0^T\int_0^1s^3\Theta^3x^{2+2\alpha-3\beta}z^2\, dxdt}}
+ \underset{J_2}{\underbrace{\lambda(2-\beta)(2-2\beta+\alpha)\int_0^T\int_0^1s\Theta x^{2\alpha-\beta}z_x^2 \, dxdt}}$$
$$\leq  \frac12  \int_0^T\int_0^1 f^2e^{2s\varphi}\, dxdt
-\underset{J_3}{\underbrace{2\lambda^2(2-\beta)^2\int_0^T\int_0^1s^2\Theta\dot{\Theta} x^{2+\alpha-2\beta}z^2 \,dxdt}}$$
$$+ \underset{J_4}{\underbrace{\lambda(2-\beta)(1+\alpha-\beta)(\beta -\alpha)\int_0^T\int_0^1s\Theta x^{2\alpha-\beta-1}zz_x \, dxdt}}\,
+ \underset{J_5}{\underbrace{\frac{\lambda}{2}\int_0^T\int_0^1s\ddot{\Theta}(d-x^{2-\beta})z^2\,dxdt}} $$
$$+ \lambda(2-\beta)\int_0^Ts\Theta z_x^2 (t,1)dt.$$
Now we will show that $J_3, \, J_4$ and $J_5$ can be absorbed by $J_1$ and $J_2$. For this, let $\varepsilon>0$ fixed to be specified later. First, Since $\beta\geq \alpha$ and $|\Theta\dot{\Theta}|\leq C\Theta^3$ then
$$|J_3|\leq C \int_0^T\int_0^1s^2\Theta^3 x^{2+2\alpha-3\beta}z^2\, dxdt\leq \varepsilon J_1$$
for $s$ large enough.
 In the other hand for $J_4$ we have
\begin{align}
|J_4|
 & \leq \lambda(2-\beta)(1+\alpha-\beta)(\beta-\alpha)\int_0^T\int_0^1\left[\sqrt{s\Theta} x^{\alpha-\frac{\beta}{2} -1}|z|\right]\left[\sqrt{s\Theta}x^{\alpha-\frac{\beta}{2}}|z_x|\right]\, dxdt \nonumber \\
& \leq \lambda(2-\beta)(1+\alpha-\beta)(\beta-\alpha)\left(\varepsilon\int_0^T\int_0^1 s\Theta x^{2\alpha-\beta -2}z^2dxdt + \frac{1}{4\varepsilon}\int_0^T\int_0^1 s\Theta x^{2\alpha-\beta }z_x^2 dxdt\right)
 \label{2341}
\end{align}
 Now we will use the Hardy-Poincar\'e inequality \eqref{Hardy}. We have $2\alpha-\beta<1$ and we will show that $\int_0^1x^{2\alpha-\beta} z_x^2dx <+\infty$.
 Using \eqref{mojor z,zx} and the fact that $\beta<1$ we obtain,
 $$|x^{2\alpha-\beta} z_x^2|
  \, \leq \, C x^{-\beta} \in L^1(0,1).$$
We have then
 $$\int_0^1x^{2\alpha-\beta-2} z^2dx \leq C_{2\alpha-\beta} \int_0^1x^{2\alpha-\beta} z_x^2dx$$    where $C_{2\alpha-\beta}= \frac{4}{(1-2\alpha+\beta)^2}$.
 Then,  we get from \eqref{2341}
$$|J_4| \leq \lambda(2-\beta)(1+\alpha-\beta)(\beta-\alpha)\left( \varepsilon C_{2\alpha-\beta}+\frac{1}{4\varepsilon}\right)\int_0^T\int_0^1 s\Theta x^{2\alpha-\beta }z_x^2 dxdt$$
The quantity $\varepsilon C_{2\alpha-\beta}+\frac{1}{4\varepsilon}$ is minimal for $\varepsilon = \frac{1}{2.\sqrt{C_{2\alpha-\beta}}}$. For this choice we have
\begin{align*}
|J_4| \,\leq \, \lambda(2-\beta)
(1+\alpha-\beta)(\beta-\alpha)\frac{2}{1-2\alpha+\beta}
 \int_0^T\int_0^1 s\Theta x^{2\alpha-\beta }z_x^2 dxdt
\end{align*}
and for all $\beta\in [\alpha,1)$ we have
\begin{align*}
\frac{2(1+\alpha-\beta)(\beta-\alpha)}{1-2\alpha+\beta} -(2-2\beta+\alpha)=\frac{(\beta-1)(2-\alpha)}{1-2\alpha+\beta} < 0
\end{align*}
The term $J_4$ can then be absorbed by $J_2$.\\
For the last term $J_5$, since
 $|\ddot{\Theta}|\leq c_4\Theta^{2}$ and $\beta \geq \alpha$, we have by applying the Hardy-Poincar\'e inequality
\begin{align}
|J_5| & \leq \lambda d c_4 \int_0^T\int_0^1s\Theta^2 z^2dxdt
\nonumber
\\
 & = \lambda d c_4\int_0^T\int_0^1\left[\sqrt{s\Theta} x^{\alpha-\frac{\beta}{2} -1}z\right]
 \left[\sqrt{s}\Theta^{\frac32} x^{1-\alpha+\frac{\beta}{2}}z\right]\, dxdt   \nonumber\\
& \leq \lambda d c_4\int_0^T\int_0^1 \left(\varepsilon s\Theta x^{2\alpha-\beta -2}z^2 +
   \frac{1}{4\varepsilon} s\Theta^3 x^{2-2\alpha+\beta }z^2\right) dxdt  \nonumber\\
& \leq
\varepsilon \lambda d c_4C_{2\alpha-\beta} \int_0^T\int_0^1 s\Theta x^{2\alpha-\beta}z_x^2\, dxdt
 +
 C_\varepsilon \int_0^T\int_0^1 s\Theta^3 x^{2+2\alpha-3\beta }z^2\, dxdt
 \nonumber
\end{align}
Therefore by choosing $\varepsilon$ small enough, we obtain
$$\int_0^T\int_0^1s^3\Theta^3x^{2+2\alpha-3\beta}z^2\, dxdt
+ \int_0^T\int_0^1s\Theta x^{2\alpha-\beta}z_x^2 \, dxdt$$
$$\leq  C \left(  \int_0^T\int_0^1 f^2e^{2s\varphi}\, dxdt
+ \int_0^Ts\Theta z_x^2 (t,1)dt\right).$$
for $s$ large enough. So replacing $z$ by $e^{s\varphi}y$ we deduce immediately the conclusion of the theorem.
\end{proof}
%
%
%
\begin{theorem}\label{carl22_1eq}
Let $T>0$ and suppose that $y_0\in H^1_\alpha$.
Then, for all $\beta\in [\alpha,1)$ there exist two positive
constants $C$ and $s_0$ such that every solution $y$ of
\eqref{1eq1}-\eqref{1eq2} satisfies for all $s\geq s_0$
\begin{align}
& \int_0^T\int_0^1\left( s\Theta(t)  x^{2\alpha-\beta}y_x^2  + s^3 \Theta^3(t)
x^{2+2\alpha-3\beta}y^2 \right)e^{2s\varphi (t,x)}\,dx dt \notag \\
& \vspace*{2cm}\leq C\left( \int_0^T\int_0^1 f^2(t,x) e^{2s\Phi (t,x)}\,dxdt+
   \int_0^T \int_{\omega'}
 s^3\phi^3 y^2 e^{2s\Phi (t,x)} dxdt\right)
 \label{estimglobal}
\end{align}
\end{theorem}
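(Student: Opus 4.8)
The plan is to start from the estimate \eqref{carl1equ} of Theorem \ref{carl_1eq} and to modify its right-hand side in two ways: replace the weight $e^{2s\varphi}$ in the source term by $e^{2s\Phi}$, and absorb the boundary term $\int_0^T s\Theta(t)\,y_x^2(t,1)\,e^{2s\varphi(t,1)}\,dt$ into a localized interior term over $\omega'$. The first modification is immediate from \eqref{comparaison phi}: since $\varphi<\Phi$ on $(0,T)\times(0,1)$ we have $e^{2s\varphi}\le e^{2s\Phi}$ pointwise, hence $\int_0^T\!\int_0^1 f^2 e^{2s\varphi}\le \int_0^T\!\int_0^1 f^2 e^{2s\Phi}$. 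The whole difficulty is therefore concentrated in controlling the boundary term at $x=1$.

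To control that term I would work on the subinterval $(a',1)$, where $x^\alpha\ge (a')^\alpha>0$ so that $\partial_t-(x^\alpha\,\cdot_x)_x$ is uniformly parabolic and a classical Carleman estimate with the nondegenerate weights $\Phi=\Theta\Psi$, $\phi=\Theta e^{\rho\sigma}$ is available (Fursikov--Imanuvilov, \cite{Fursikov}). I would apply it not to $y$ but to $w:=\chi y$, where $\chi\in C^\infty([a',1])$ is a cut-off with $\chi\equiv 0$ near $a'$, $\chi\equiv 1$ near $1$ and $\mathrm{supp}\,\chi_x\Subset\omega'$; then $w(t,a')=w(t,1)=0$, and since $\chi(1)=1$, $y(t,1)=0$ one has $w_x(t,1)=y_x(t,1)$. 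A short computation shows that $w$ solves $w_t-(x^\alpha w_x)_x=g_w$ with $g_w:=\chi f-2x^\alpha\chi_x y_x-(x^\alpha\chi_x)_x y$, the last two terms being supported in $\mathrm{supp}\,\chi_x$. The essential point is to retain the boundary contribution at $x=1$ rather than discard it: in the integration by parts it enters with the factor $\Phi_x(t,1)$, and since $\sigma(1)=0$, $\sigma>0$ on $(a',1)$ together with $1\notin\omega_0$ force $\sigma_x(1)<0$, so $\Phi_x(t,1)=\Theta(t)\rho\sigma_x(1)<0$; this term has a favourable sign and survives on the left, yielding
\[
c\int_0^T s\,\phi(t,1)\,y_x^2(t,1)\,e^{2s\Phi(t,1)}\,dt\le C\int_0^T\!\!\int_{a'}^1 g_w^2\,e^{2s\Phi}\,dx\,dt+C\int_0^T\!\!\int_{\omega_0} s^3\phi^3 w^2 e^{2s\Phi}\,dx\,dt
\]
with $c>0$.

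Next I would reduce the right-hand side to the required form. Choosing $\omega_0\Subset\omega'$ makes the $\omega_0$-integral a piece of the observation over $\omega'$. Expanding $g_w^2\le C\big(f^2+\mathbf 1_{\mathrm{supp}\,\chi_x}(y^2+y_x^2)\big)$, the $f$-part is already of the desired shape, the $y^2$-part is absorbed into the $\omega'$-observation since $\mathrm{supp}\,\chi_x\Subset\omega'$, and the troublesome $y_x^2$-part is traded for a local $y^2$-integral by the Caccioppoli inequality proved in the appendix. This gives
\[
\int_0^T s\,\phi(t,1)\,y_x^2(t,1)\,e^{2s\Phi(t,1)}\,dt\le C\Big(\int_0^T\!\!\int_0^1 f^2 e^{2s\Phi}\,dx\,dt+\int_0^T\!\!\int_{\omega'} s^3\phi^3 y^2 e^{2s\Phi}\,dx\,dt\Big).
\]
Finally I would match the weights at $x=1$: because $\sigma(1)=0$ one has $\phi(t,1)=\Theta(t)$, while $\varphi(t,1)<\Phi(t,1)$ by \eqref{comparaison phi} gives $e^{2s\varphi(t,1)}\le e^{2s\Phi(t,1)}$, so that $s\Theta(t)\,y_x^2(t,1)\,e^{2s\varphi(t,1)}\le s\,\phi(t,1)\,y_x^2(t,1)\,e^{2s\Phi(t,1)}$ pointwise in $t$. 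Substituting the last displayed bound into \eqref{carl1equ}, together with the source-weight replacement, yields \eqref{estimglobal}.

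I expect the main obstacle to be the careful bookkeeping in the classical estimate so that the boundary term at $x=1$ is kept with the correct sign, and the handling of the cut-off commutator: its $y_x^2$ contribution cannot be absorbed directly and must be converted into a $y^2$ term by Caccioppoli's inequality. This is precisely what dictates the geometric requirements $\mathrm{supp}\,\chi_x\Subset\omega'$ and $\omega_0\Subset\omega'$, ensuring that every leftover term lives in the observation region $\omega'$.
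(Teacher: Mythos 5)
Your proposal is correct, but it handles the crux of the theorem --- the boundary term $\int_0^T s\Theta\, y_x^2(t,1)e^{2s\varphi(t,1)}dt$ in \eqref{carl1equ} --- by a genuinely different mechanism than the paper. The paper never estimates that term at all: it applies Theorem \ref{carl_1eq} not to $y$ but to $z=\xi y$ with $\xi\equiv 0$ near $x=1$, so that $z_x(t,1)=0$ and the boundary term simply vanishes; the region near $x=1$ is then recovered by applying the interior-observation classical estimate \eqref{classicFursikov} to $Z=(1-\xi)y$ on $(a',1)$, and the two pieces are glued using $1/2\le\xi^2+\zeta^2\le 1$ together with the comparison \eqref{compare varphi and Phi} to convert the $(\phi,\Phi)$-weighted left-hand side of the $Z$-estimate back into $(\Theta,\varphi)$-weights. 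You instead apply \eqref{carl1equ} to $y$ itself, retain the boundary term, and dominate it by a boundary-retaining classical Carleman estimate for $\chi y$ near $x=1$, using $\sigma_x(1)<0$ (which indeed follows from $\sigma>0$ on $(a',1)$, $\sigma(1)=0$ and $\omega_0\Subset(a',1)$) so that $\int_0^T s\,\phi(t,1)\,y_x^2(t,1)\,e^{2s\Phi(t,1)}dt$ appears with favourable sign on the left; your weight matching at $x=1$ is also sound, since $\phi(t,1)=\Theta(t)$ and $\varphi(t,1)<\Phi(t,1)$ follows from $\lambda(d-1)>e^{2\rho\|\sigma\|_\infty}$. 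What each route buys: your combination step is cleaner --- the left-hand side of \eqref{estimglobal} comes out directly in the $\varphi$-weights for the whole of $y$, and you need the $\varphi$-versus-$\Phi$ comparison only at the single point $x=1$ rather than the two-sided pointwise bounds \eqref{compare varphi and Phi} on all of $(a',1)$ --- but the price is that the classical input you invoke is strictly stronger than the Proposition the paper quotes from \cite{Fursikov}, whose Lemma 1.2 as cited discards all boundary contributions; you must therefore rederive (or locate) the variant keeping the $x=1$ term, and verify, as the paper does in its Remark for the interior version, that it survives replacing $1/(t(T-t))$ by $\Theta=1/\bigl(t^k(T-t)^k\bigr)$. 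Your treatment of the commutator terms via Caccioppoli's inequality and the requirement $\mathrm{supp}\,\chi_x\Subset\omega'$, $\omega_0\Subset\omega'$ mirrors what the paper does implicitly, so apart from the extra bookkeeping in the boundary-retaining classical estimate, the argument is complete.
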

\begin{proof}
Let us consider an arbitrary open subset $\omega'':=(a'',b'') \Subset\omega'$ and a cut-off function $\xi\in \mathcal{C}^\infty(0,1)$
such that
$$
\begin{cases} 0\leq \xi(x)\leq 1, &x\in(0,1),\\
\xi(x)=1 ,&0\leq x\leq a'',\\
\xi(x)=0 , & b''\leq x\leq 1.
\end{cases}
$$
Let $z=\xi y$ where $y$  is the solution of
\eqref{1eq1}-\eqref{1eq2}.  Then $z$ satisfies
 the following system
 \begin{align}
 & z_t-(x^\alpha z_x)_x= \xi f -\xi_x x^\alpha y_x-(x^\alpha\xi_xy)_x,  & (t,x)\in(0,T)\times(0,1), \label{firstw}\\
 &  z(t,1)=z(t,0)=0, &  t\in (0,T), \label{tirtw}
\end{align}
Therefore, applying the Carleman estimate \eqref{carl1equ} to the equation \eqref{firstw}
we obtain
\begin{align}\label{es3288}
& \int_0^T\int_0^1 [s\Theta(t)x^{2\alpha-\beta}
z_x^2(t,x)+s^3 \Theta^3(t)x^{2+2\alpha-3\beta}
z^2(t,x)]e^{2s\varphi}dx dt
\notag \\
&\leq C \int_0^T\int_0^1 [\xi^2f^2+\left(\xi_x x^\alpha
y_x+(x^\alpha \xi_xy)_x\right)^2]e^{2s\varphi}dx dt.
\end{align}
So using the definition of $\xi$ and the Cacciopoli's inequality, see  Lemma \ref{cacci}, we obtain
\begin{align}\label{es3377}
\int_0^T\int_0^1\left(\xi_x x^\alpha y_x+ (x^\alpha\xi_xy)_x\right)^2e^{2s\varphi}dxdt
&\leq C \int_0^T\int_{\omega''}[y^2+y_x^2]e^{2s\varphi}dx dt\nonumber \\
& \leq C\int_0^T\int_{\omega'} y^2e^{2s\varphi}dx dt.
\end{align}
and
\begin{align}
\int_0^T\int_0^1s\Theta x^{2\alpha-\beta}\xi^2 y_x^2 e^{2s\varphi}dxdt\leq 2 \int_0^T\int_0^1 s\Theta x^{2\alpha-\beta}z_x^2e^{2s\varphi}dxdt + 2\int_0^T\int_{\omega'}s\Theta y^2e^{2s\varphi}dxdt
\end{align}
Thus from \eqref{es3288}-\eqref{es3377} and the definition of $\xi$ we deduce the following estimate
\begin{align}\label{estimleft}
& \int_0^T\int_0^1 [s\Theta(t)x^{2\alpha-\beta}
\xi^2y_x^2(t,x)+s^3 \Theta^3(t)x^{2+2\alpha-3\beta}
\xi^2y^2(t,x)]e^{2s\varphi}dx dt
\notag \\
&\leq C \left( \int_0^T\int_0^1 \xi^2 f^2e^{2s\varphi}dxdt + \int_0^T\int_{\omega'} s\Theta y^2e^{2s\varphi}dxdt\right).
\end{align}
On $(a',1)$ the equation \eqref{1eq1} is uniformly parabolic
hence, one can use the following Carleman estimate which is a consequence of
(\cite{Fursikov}, Lemma 1.2) established by Fursikov and Imanuvilov.
\begin{proposition}
Consider the nondegenerate linear problem
$$\begin{cases}
 v_t-(x^\alpha v_x)_x=f \in L^2((0,T)\times(a',1)), \\
 v(t,a')=v(t,1)=0,\,\,\, t\in (0,T), & \label{alban-Can2}
\end{cases}$$
Then, there exists a constant $\rho_0 >0$ such that for all $\rho\geq\rho_0$ there exists $s_0(\rho)>0$ such that for each $s\geq s_0(\rho)$ the solution $v$ of the last problem
satisfy the following estimate:
\begin{align}\label{classicFursikov}
 & \int_0^T\int_{a'}^1 (s\phi v_x^2+
s^3 \phi^3 v^2)e^{2s\Phi}dx dt  \notag \\
& \leq C \left(\int_0^T\int_{a'}^1 f^2e^{2s\Phi}dx dt  + \int_0^T\int_{\omega'} s^3 \phi^3 v^2e^{2s\Phi}dx dt\right)
 \end{align}
where the functions
 $\Phi$ and $\phi$ are defined in Theorem \ref{carl22_1eq}.
\end{proposition}
\begin{remark}
 The last estimate was showed  in  \cite{Fursikov} for $\Theta(t)=\frac{1}{t(T-t)}$
but by careful examination of the proof one can see easily that it remains valid for all $\Theta\in C^2(0,T)$ satisfying
\eqref{majorTheta1}, see Remark \ref{rk1}.
\end{remark}
To achieve the proof of the Theorem \ref{carl01}, let $Z:=\zeta y,$ where the function
$\zeta$ is defined as $\zeta=1-\xi.$ Then $Z$ is a solution of the following problem
\begin{align*}
& Z_t-(x^\alpha Z_x)_x=\zeta f-\zeta_x x^\alpha
 y_x-(x^\alpha\zeta_x y)_x, & (t,x)\in(0,T)\times(a',1),
 \\
& Z(t,1)=Z(t,a'
)=0 ,&  t\in (0,T),
\end{align*}
Applying the classical Carleman estimate \eqref{classicFursikov},
it follows that for $s$ large enough
\begin{align*}
  & \int_0^T\int_{0}^1 (s\phi Z_x^2  +
s^3 \phi^3 Z^2)e^{2s\Phi}dx dt      \\
& \leq  C\left(\int_0^T\int_{0}^1 \left[\zeta f+\zeta_x x^\alpha
 y_x+(x^\alpha \zeta_x y)_x\right]^2e^{2s\Phi}dx dt
 + \int_0^T\int_{\omega'} s^3 \phi^3 Z^2e^{2s\Phi}dx dt\right)     \\
& \leq  C\left(\int_0^T\int_{0}^1 \zeta^2 f^2e^{2s\Phi}dx dt
  + \int_0^T\int_{\omega''}[y^2+y_x^2] e^{2s\Phi}dxdt\right.
\left.+ \int_0^T\int_{\omega'} s^3 \phi^3 Z^2e^{2s\Phi}dx dt \right)
\end{align*}
Therefore, using the Caccioppoli inequality and the definitions of $Z$ and $\zeta$ we deduce
\begin{align}
  & \int_0^T\int_{0}^1 (s\phi \zeta^2y_x^2  +
s^3 \phi^3 \zeta^2y^2)e^{2s\Phi}dx dt     \nonumber \\
& \leq  C\left(\int_0^T\int_{0}^1 \zeta^2 f^2e^{2s\Phi}dx dt
+ \int_0^T\int_{\omega'} s^3 \phi^3 y^2e^{2s\Phi}dx dt \right)\label{estimright}
\end{align}
Thanks to \eqref{comparaison phi}  there exists a constant $c>0$ such that for all $(t,x)\in[0,T]\times(a',1)$ one has
\begin{align}\label{compare varphi and Phi}
\Theta x^{2\alpha-\beta}e^{2s\varphi(t,x)}\leq c
\phi e^{2s\Phi(t,x)} \,\mbox{and} \,\,
\Theta^3 x^{2+2\alpha-3\beta}e^{2s\varphi(t,x)}\leq c \phi^3e^{2s\Phi(t,x)}
\end{align}
Then, using \eqref{estimleft}, \eqref{estimright}, \eqref{comparaison phi}, \eqref{majorTheta1} and the fact that $1/2 \leq \xi^2+\zeta^2\leq 1$
 we obtain the global estimate
\begin{align}
& \int_0^T\int_0^1\left( s\Theta(t)  x^{2\alpha-\beta}y_x^2  + s^3 \Theta^3(t)
x^{2+2\alpha-3\beta}y^2 \right)e^{2s\varphi (t,x)}\,dx dt \notag \\
& \vspace*{2cm}\leq C\left( \int_0^T\int_0^1 f^2(t,x) e^{2s\Phi (t,x)}\,dxdt+
   \int_0^T \int_{\omega'}  s^3\phi^3 y^2 e^{2s\Phi (t,x)} dxdt\right)
 \label{estimglobal1}
\end{align}
This ends the proof of Theorem \ref{carl22_1eq}.
\end{proof}
The estimate in Theorem \ref{carl22_1eq} was obtained for regular initial data.
 By density we deduce the following result for the general case: $y_0 \in L^2(0,1)$.
 %
 %
 %
 \begin{corollary}
Let $T>0$ be given. Let $\beta\in [\alpha,1)$ and $\mu\geq max(0,2+2\alpha-3\beta)$. Then there exist two positive
constants $C$ and $s_0$ such that every solution $y$ of
\eqref{1eq1}-\eqref{1eq2} satisfies for all $s\geq s_0$
\begin{align}
& \int_0^T\int_0^1\left( s\Theta  x^{\alpha}y_x^2  + s^3 \Theta^3
x^{\mu}y^2 \right)e^{2s\varphi (t,x)}\,dx dt \notag \\
& \vspace*{2cm}\leq C\left( \int_0^T\int_0^1 f^2(t,x) e^{2s\Phi (t,x)}\,dxdt+
   \int_0^T \int_{\omega'}
 s^3\phi^3
y^2
e^{2s\Phi (t,x)}
dxdt\right)
 \label{estimgloballll}
\end{align}
 \end{corollary}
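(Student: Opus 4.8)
The plan is to deduce the corollary from Theorem \ref{carl22_1eq} in two steps: first a pointwise comparison of the spatial weights that replaces the exponents $2\alpha-\beta$ and $2+2\alpha-3\beta$ by the weaker ones $\alpha$ and $\mu$ (for regular data $y_0\in H^1_\alpha$), and then a density argument that removes the regularity hypothesis and reaches $y_0\in L^2(0,1)$.

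For the first step, I note that every spatial weight lives on $(0,1)$, where $x\mapsto x^p$ is decreasing in $p$. Since $\beta\in[\alpha,1)$ gives $2\alpha-\beta\le\alpha$, one has $x^{\alpha}\le x^{2\alpha-\beta}$ on $(0,1)$; and since $\mu\ge 2+2\alpha-3\beta$ one has $x^{\mu}\le x^{2+2\alpha-3\beta}$ on $(0,1)$. Hence the left-hand side of \eqref{estimgloballll} is bounded above, term by term, by the left-hand side of \eqref{estimglobal}. As the two right-hand sides coincide, Theorem \ref{carl22_1eq} immediately yields \eqref{estimgloballll} for every $y_0\in H^1_\alpha$.

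For the second step I use that $H^1_\alpha$ is dense in $L^2(0,1)$. Given $y_0\in L^2(0,1)$, I fix $f$ and choose $y_0^n\in H^1_\alpha$ with $y_0^n\to y_0$ in $L^2(0,1)$; let $y^n$ be the corresponding solutions of \eqref{1eq1}-\eqref{1eq2}. Since $y^n-y^m$ solves the homogeneous problem with datum $y_0^n-y_0^m$, the continuous-dependence estimate \eqref{estiminitialdata} (in its one-equation version, Proposition \ref{estimsemigroup}) shows that $(y^n)$ is Cauchy, hence convergent to $y$, in $C([0,T],L^2(0,1))\cap L^2(0,T;H^1_\alpha)$. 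Each $y^n$ satisfies \eqref{estimgloballll} by the first step, and it remains to pass to the limit. On the right-hand side the term carrying $f$ is independent of $n$; on $\omega'=(a',b')\Subset(0,1)$ the weight $s^3\phi^3 e^{2s\Phi}$ is bounded on $(0,T)\times\omega'$, because $\Psi<0$ forces $\Theta^3e^{2s\Psi\Theta}\to0$ as $t\to0^+,T^-$, so $y^n\to y$ in $L^2((0,T)\times\omega')$ gives convergence of the observation term. On the left-hand side all integrands are nonnegative, so after extracting a subsequence along which $y^n\to y$ and $x^{\alpha/2}y^n_x\to x^{\alpha/2}y_x$ almost everywhere on $(0,T)\times(0,1)$, Fatou's lemma bounds the left-hand side of \eqref{estimgloballll} for $y$ by the liminf of those for $y^n$. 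Combining the two sides yields \eqref{estimgloballll} for $y_0\in L^2(0,1)$.

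The delicate point is exactly this limit passage, and it is where the weakened exponents become essential. The weight $x^\alpha$ in the gradient term is the one carried by the $H^1_\alpha$-energy, so $L^2(0,T;H^1_\alpha)$-convergence controls it directly, while $\mu\ge0$ makes $x^\mu\le1$ bounded, so the zero-order term is controlled by $C([0,T],L^2)$-convergence; together with the boundedness in $t$ of $s\Theta e^{2s\varphi}$ and $s^3\Theta^3 e^{2s\varphi}$ (again from $\psi<0$), this legitimises Fatou. Had one retained the original exponent $2+2\alpha-3\beta$, which may be negative, the zero-order weight $x^{2+2\alpha-3\beta}$ could be singular at $x=0$ and the limit would not be justified by this convergence alone; the hypothesis $\mu\ge\max(0,2+2\alpha-3\beta)$ is precisely what reconciles the weight comparison of the first step with the integrability needed in the second.
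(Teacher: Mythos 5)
Your proposal is correct and follows essentially the same route as the paper: the pointwise weight comparison $x^{\alpha}\leq x^{2\alpha-\beta}$, $x^{\mu}\leq x^{2+2\alpha-3\beta}$ applied to Theorem \ref{carl22_1eq}, followed by density of $H^1_\alpha(0,1)$ in $L^2(0,1)$, convergence of $y^n$ in $C([0,T],L^2)\cap L^2(0,T;H^1_\alpha)$ via the continuous-dependence estimate, and a limit passage justified by the boundedness of $s\Theta e^{2s\varphi}$, $s^3\Theta^3 x^{\mu}e^{2s\varphi}$ and $s^3\phi^3 e^{2s\Phi}$. Your use of Fatou's lemma on the left-hand side is a slightly more explicit (and equally valid) version of the paper's terse ``one can pass to the limit,'' not a genuinely different argument.
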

\begin{proof}
Let $y_0\in L^2(0,1).$ By the density of $H^1_\alpha(0,1)$ in $L^2(0,1)$, there exist a set $(y_0^n)_n$ in $H^1_\alpha(0,1)$ which converges to  $y_0$.
Let $y^n$ the unique solution in the space
 $Z_T:=C\left( [0,T],L^2(0,1) \right)
\cap L^2\left(0,T;H^1_{\alpha}\right)$
of the problem \eqref{1eq1}-\eqref{1eq2} associated to the initial data $y_0^n$. As in \eqref{estiminitialdata}
 one has for a constant $C_T>0$
 $$\|(y^m-y^n)(t)\|_{Z_T}:= \sup_{[0,T]}\|(y^m-y^n)(t)\|^2_{L^2} + \int_0^T \| x^{\frac{\alpha}{2}}(y^m-y^n)_x \|^2_{L^2} \, dt
\leq C_T \|y^m_0-y^n_0\|^2_{L^2}.$$
Therefore the set $(y^n)_n$ has a limit $y$ in the Banach space $Z_T$. Using classical argument in semigroup theory it is easy to show that $y$ is the solution of the problem \eqref{1eq1}-\eqref{1eq2} associated to the initial data $y_0$.
 On the other hand since
  $x^{\alpha}\leq x^{2\alpha-\beta}$ and $x^{\mu}\leq x^{2+2\alpha-3\beta}$ on (0,1) then we deduce from Theorem \ref{carl22_1eq} the estimate
\begin{align*}
& \int_0^T\int_0^1\left( s\Theta  x^{\alpha}|y^n_x|^2  + s^3 \Theta^3
x^{\mu}|y^n|^2 \right)e^{2s\varphi (t,x)}\,dx dt \notag \\
& \vspace*{2cm}\leq C\left( \int_0^T\int_0^1 f^2(t,x) e^{2s\Phi (t,x)}\,dxdt+
   \int_0^T \int_{\omega'}  s^3\phi^3|y^n|^2 e^{2s\Phi (t,x)}dxdt\right)
\end{align*}
 And since $s\Theta e^{2s\varphi}$, $s^3\Theta^3 e^{2s\varphi}x^{\mu}$ and $s^3\phi^3 e^{2s\Phi}$ are bounded then one can pass to the limit and get the desired estimate.
\end{proof}
%
%
%
%
For the coupled system \eqref{adjfirst}-\eqref{adjend} we prove first an intermediate important result which could be used to show the null controllability for a coupled system with two control forces
\begin{theorem}\label{carl01}
Let $T>0$ and $(\alpha_1,\alpha_2)\in (0,1)\times(0,1)$ be given and suppose that $y_0\in H_\alpha^1$. Then for all
$\beta \in [max(\alpha_1,\alpha_2),1[$
there exist two positive
constants $C$ and $s_0$ such that every solution $(U,V)$ of
\eqref{adjfirst}-\eqref{adjend} satisfies
\begin{align}\label{Carl2degenr}
& \int_0^T\int_0^1 s\Theta(t) \left[ x^{2\alpha_1-\beta}U_x^2(t,x)+x^{2\alpha_2-\beta}V_x^2(t,x)\right]e^{2s\varphi (t,x)}\,dx dt \notag \\
& +\int_0^T\int_0^1 s^3 \Theta^3(t)\left[
x^{2+2\alpha_1-3\beta}U^2(t,x)+x^{2+2\alpha_2-3\beta}V^2(t,x)\right] e^{2s\varphi (t,x)}\,dx dt \notag \\
& \vspace*{2cm}\leq C\int_0^T\int_{\omega'} s^3
\Theta^3 \left[U^2(t,x)+V^2(t,x)\right]e^{2s\Phi(t,x)}
\,dxdt \quad \quad \mbox{for all}\,\,s\geq s_0.
\end{align}
 \end{theorem}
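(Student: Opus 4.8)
The plan is to treat each equation of the adjoint system as a single degenerate heat equation whose source is the coupling term, and then to reproduce the decoupled estimate of Theorem \ref{carl22_1eq} for each component and sum. Writing the first equation as $U_t-(x^{\alpha_1}U_x)_x=f_U$ with $f_U:=-b_{11}U-b_{21}V$ and the second as $V_t-(x^{\alpha_2}V_x)_x=f_V$ with $f_V:=-b_{22}V-b_{12}U$, the boundedness of the $b_{ij}$ gives $f_U^2+f_V^2\le C(U^2+V^2)$ pointwise. Since $\beta\in[\max(\alpha_1,\alpha_2),1)$ lies in $[\alpha_i,1)$ for both $i=1,2$, the common weights $\varphi,\Phi,\phi$ are admissible for each equation simultaneously; this is precisely where the diffusion-independence of the weights is used.

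The naive route --- applying the global estimate \eqref{estimglobal} to $U$ and to $V$ and adding --- produces on the right-hand side the full-domain coupling contribution $C\int_0^T\int_0^1(U^2+V^2)e^{2s\Phi}\,dx\,dt$. This cannot be absorbed into the $\varphi$-weighted left-hand side: because $\tfrac43\Phi<\varphi<\Phi<0$, the ratio $e^{2s(\Phi-\varphi)}=e^{2s(\Psi-\psi)\Theta}$ blows up as $t\to0^+$ and $t\to T^-$, so no power of $s$ beats it. The key point is therefore to absorb the coupling \emph{region by region}, using in each region a weight matching the one that multiplies the solution on the left. To this end I would redo the proof of Theorem \ref{carl22_1eq} for the pair $(U,V)$ rather than invoke it as a black box, carrying the coupling as a genuine source throughout.

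Concretely, fix the cut-off $\xi$ from the proof of Theorem \ref{carl22_1eq} and split $U=\xi U+(1-\xi)U$, $V=\xi V+(1-\xi)V$. On the degenerate part $\xi U,\xi V$ I would apply Theorem \ref{carl_1eq}: the boundary term at $x=1$ disappears since $\xi\equiv0$ near $1$, and the source picks up $\xi f_U$ (hence is $\le C\xi^2(U^2+V^2)=C((\xi U)^2+(\xi V)^2)$, with the matched weight $e^{2s\varphi}$) together with commutator terms supported in $\omega''$. After summing the $U$- and $V$-estimates, the coupling contribution $C\int((\xi U)^2+(\xi V)^2)e^{2s\varphi}$ is dominated --- via the Hardy--Poincar\'e inequality (Lemma \ref{Hardy2}), which feeds $\int(\xi U)^2 e^{2s\varphi}$ into the gradient terms $s\Theta x^{2\alpha_i-\beta}(\cdot)_x^2e^{2s\varphi}$, together with the smallness of $1/(s\Theta)$ --- by $\tfrac12$ of the summed left-hand side for $s$ large; the commutators are controlled by Caccioppoli's inequality (Lemma \ref{cacci}) and reduce to $\int_{\omega'}$-terms. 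On the nondegenerate part $(1-\xi)U,(1-\xi)V$, supported in $(a',1)$, I would apply the classical Fursikov--Imanuvilov estimate \eqref{classicFursikov}; there the source is weighted by $e^{2s\Phi}$, which now \emph{matches} the dominant left-hand term $s^3\phi^3(\cdot)^2e^{2s\Phi}$, so the coupling $C\int(1-\xi)^2(U^2+V^2)e^{2s\Phi}$ is absorbed for $s$ large because $\phi$ is bounded below on $(a',1)$.

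Finally I would recombine the two regional estimates exactly as in the proof of Theorem \ref{carl22_1eq}: using $\tfrac12\le\xi^2+(1-\xi)^2\le1$ to reconstruct $U^2,V^2,U_x^2,V_x^2$ on all of $(0,1)$, and the comparison \eqref{compare varphi and Phi} (coming from \eqref{comparaison phi}) to pass the nondegenerate $\Phi$-weighted left-hand contributions back to the $\varphi$-weighted quantities appearing in \eqref{Carl2degenr}, while every surviving right-hand term is an $\omega'$-observation of $U$ or $V$ with weight $s^3\Theta^3e^{2s\Phi}$. The main obstacle is exactly the coupling absorption described above: the whole difficulty is organizing the argument so that the coupling is always weighted identically to the term that swallows it, which forces the degenerate/nondegenerate splitting and rules out a direct application of the one-equation global estimate.
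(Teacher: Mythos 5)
Your proposal is correct and follows essentially the same route as the paper's proof: the paper likewise refuses to use Theorem \ref{carl22_1eq} as a black box and instead applies its intermediate cut-off estimates \eqref{estimleft} and \eqref{estimright} to each equation with the coupling as source, absorbs the $e^{2s\varphi}$-weighted coupling on the degenerate part through the Hardy--Poincar\'e inequality of Lemma \ref{Hardy2} (applied to $\xi U e^{s\varphi}$, $\xi V e^{s\varphi}$, with the cross terms eaten by summing the $U$- and $V$-estimates) and the matched $e^{2s\Phi}$-weighted coupling on the nondegenerate part, then recombines via $\xi^2+\zeta^2\geq \frac12$ and \eqref{compare varphi and Phi}. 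The only cosmetic difference is that you would re-derive those regional estimates from Theorem \ref{carl_1eq} and \eqref{classicFursikov} rather than quote \eqref{estimleft} and \eqref{estimright} directly, and your diagnosis of why the naive global application fails (the unabsorbable factor $e^{2s(\Phi-\varphi)}$ blowing up as $t\to 0^+,T^-$) correctly explains the structure the paper adopts.
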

\begin{proof}
Since $U$ is solution of the problem
\begin{align*}
& U_{t} -(x^{\alpha_1}U_x)_x =- b_{11}(t,x) U-b_{21}(t,x) V, &  (t,x)\in (0,T)\times(0,1), \\
& U(t,1)=U(t,0)=0,\, & t\in (0,T),\\
& U(0,x)=U_0(x), & x\in (0,1),
\end{align*}
then applying the estimate \eqref{estimleft} to this system we obtain
\begin{align}\label{estimU}
& \int_0^T\int_0^1 [s\Theta(t)x^{2\alpha_1-\beta}
\xi^2U_x^2(t,x)+s^3 \Theta^3(t)x^{2+2\alpha_1-3\beta}
\xi^2U^2(t,x)]e^{2s\varphi}dx dt
\notag \\
&\leq \overline{C} \int_0^T\int_0^1 \xi^2 (b_{11}^2U^2+b_{21}^2V^2)e^{2s\varphi}dxdt + C\int_0^T\int_{\omega'}s\Theta U^2e^{2s\varphi}dxdt.
\end{align}
Using the Hardy-Poincar\'e inequality \eqref{Hardy} one has for  $s$ large enough
\begin{align*}
 \int_0^T\int_0^1 b_{11}^2 \xi^2U^2e^{2s\varphi}dxdt &\leq C
  \int_0^T\int_0^1\left[ x^{\alpha_1 -\frac{\beta}{2} -1}\xi Ue^{s\varphi}\right]
 \left[ x^{1-\alpha_1+\frac{\beta}{2}}\xi Ue^{s\varphi}\right]\, dxdt  \nonumber \\
& \leq C \int_0^T\int_0^1 \left( x^{2\alpha_1-\beta -2}\xi^2U^2 +
  x^{2-2\alpha_1+\beta }\xi^2U^2\right)e^{2s\varphi} dxdt \nonumber \\
&  \leq C\int_0^T\int_0^1  x^{2\alpha_1-\beta }(\xi Ue^{s\varphi})_x^2 dxdt
+
C\int_0^T\int_0^1  x^{2-2\alpha_1+\beta }\xi^2U^2e^{2s\varphi} dxdt \nonumber \\
& \leq C \int_0^T\int_0^1  \left( x^{2\alpha_1-\beta }\xi^2 U_x^2+x^{2\alpha_1-\beta }\xi_x^2 U^2+s^2\Theta^2x^{2+2\alpha_1-3\beta}\xi^2U^2\right)e^{2s\varphi}dxdt  \nonumber \\
&\hspace*{.5cm} + C \int_0^T\int_0^1  x^{2-2\alpha_1+\beta }\xi^2U^2e^{2s\varphi} dxdt.
\end{align*}
So since $\beta\geq \alpha$, $\xi_x$ is supported in $\omega'$ and $\Theta$ is bounded below then for $s$ large enough we have
\begin{align}\label{b11U}
\bar{C} \int_0^T\int_0^1 b_{11}^2 \xi^2U^2e^{2s\varphi}dxdt \, \leq \,\,\,  & \frac14  \int_0^T\int_0^1 [s\Theta(t)x^{2\alpha_1-\beta} \xi^2 U_x^2+ s^3 \Theta^3(t)x^{2+2\alpha_1-3\beta}
\xi^2U^2]e^{2s\varphi}dx dt
\nonumber \\
& + C\int_0^T\int_{\omega'} U^2e^{2s\varphi}dx dt
\end{align}
Similarly, for $s$ large enough we have
\begin{align}\label{b21V}
\bar{C} \int_0^T\int_0^1 b_{21}^2 \xi^2V^2e^{2s\varphi}dxdt \, \leq \,\,\,  & \frac14  \int_0^T\int_0^1 [s\Theta(t)x^{2\alpha_2-\beta} \xi^2 V_x^2+ s^3 \Theta^3(t)x^{2+2\alpha_2-3\beta}
\xi^2V^2]e^{2s\varphi}dx dt
\nonumber \\
& + C\int_0^T\int_{\omega'} V^2e^{2s\varphi}dx dt
\end{align}
Combining \eqref{estimU}, \eqref{b11U} and \eqref{b21V} we deduce the estimate
\begin{align}\label{estimU2}
 \int_0^T\int_0^1 [s\Theta(t)x^{2\alpha_1-\beta}
\xi^2U_x^2(t,x) & +s^3 \Theta^3(t)x^{2+2\alpha_1-3\beta}
\xi^2U^2(t,x)]e^{2s\varphi}dx dt
\notag \\
&  \leq \,\, \frac14  \int_0^T\int_0^1 [s\Theta(t)x^{2\alpha_1-\beta} \xi^2 U_x^2+ s^3 \Theta^3(t)x^{2+2\alpha_1-3\beta}
\xi^2U^2]e^{2s\varphi}dx dt
\nonumber \\
& \,\,\,\,\,\,\,+ \frac14  \int_0^T\int_0^1 [s\Theta(t)x^{2\alpha_2-\beta} \xi^2 V_x^2+ s^3 \Theta^3(t)x^{2+2\alpha_2-3\beta}
\xi^2V^2]e^{2s\varphi}dx dt
\nonumber \\
& \,\,\,\,\,\,\, + C\int_0^T\int_{\omega'} s\Theta(U^2+V^2)e^{2s\varphi}dx dt.
\end{align}
For the second component, Arguing as before we have for $s$ large enough
\begin{align}\label{estimV2}
 \int_0^T\int_0^1 [s\Theta(t)x^{2\alpha_2-\beta}
\xi^2V_x^2(t,x) & +s^3 \Theta^3(t)x^{2+2\alpha_2-3\beta}
\xi^2V^2(t,x)]e^{2s\varphi}dx dt
\notag \\
&  \leq \,\, \frac14  \int_0^T\int_0^1 [s\Theta(t)x^{2\alpha_2-\beta} \xi^2 V_x^2+ s^3 \Theta^3(t)x^{2+2\alpha_2-3\beta}
\xi^2V^2]e^{2s\varphi}dx dt
\nonumber \\
& \,\,\,\,\,\,\,+ \frac14  \int_0^T\int_0^1 [s\Theta(t)x^{2\alpha_1-\beta} \xi^2 U_x^2+ s^3 \Theta^3(t)x^{2+2\alpha_1-3\beta}
\xi^2U^2]e^{2s\varphi}dx dt
\nonumber \\
& \,\,\,\,\,\,\, + C\int_0^T\int_{\omega'} s\Theta(U^2+V^2)e^{2s\varphi}dx dt.
\end{align}
Therefore, from \eqref{estimU2} and \eqref{estimV2} we deduce the estimate
\begin{align}\label{estimUV1}
 \int_0^T\int_0^1 [s\Theta(t)x^{2\alpha_1-\beta}
\xi^2U_x^2(t,x) & +s^3 \Theta^3(t)x^{2+2\alpha_1-3\beta}
\xi^2U^2(t,x)]e^{2s\varphi}dx dt
\notag \\
+ \int_0^T\int_0^1 [s\Theta(t)x^{2\alpha_2-\beta}
\xi^2V_x^2(t,x) & +s^3 \Theta^3(t)x^{2+2\alpha_2-3\beta}
\xi^2V^2(t,x)]e^{2s\varphi}dx dt
\notag \\
\leq C\int_0^T\int_{\omega'} s\Theta(U^2+V^2)e^{2s\varphi}dx dt.
\end{align}
This gives an estimate on $(0,a')$.
 As above, to obtain an estimate on $(a',1)$, we apply \eqref{estimright} to each equation of the system \eqref{adjfirst}-\eqref{adjend},  we use Hardy-Poincar\'e inequality and we obtain the estimate
\begin{align}\label{estimUV2}
 \int_0^T\int_0^1 [s\phi
\zeta^2(U_x^2+V_x^2)  +s^3 \phi^3\zeta^2(U^2+V^2)]e^{2s\Phi}dx dt
\notag \\
\leq C\int_0^T\int_{\omega'}s^3\phi^3 (U^2+V^2)e^{2s\Phi}dx dt.
\end{align}
Consequently, using \eqref{estimUV1}, \eqref{estimUV2} and \eqref{compare varphi and Phi} we deduce the global estimate
\begin{align*}
& \int_0^T\int_0^1 s\Theta(t) \left[ x^{2\alpha_1-\beta}U_x^2(t,x)+x^{2\alpha_2-\beta}V_x^2(t,x)\right]e^{2s\varphi}\,dx dt \notag \\
& +\int_0^T\int_0^1 s^3 \Theta^3(t)\left[
x^{2+2\alpha_1-3\beta}U^2(t,x)+x^{2+2\alpha_2-3\beta}V^2(t,x)\right] e^{2s\varphi}\,dx dt \notag \\
& \vspace*{2cm}\leq C\int_0^T\int_{\omega'} s^3
\Theta^3
 \left[U^2(t,x)+V^2(t,x)\right]e^{2s\Phi} \,dxdt.
\end{align*}
This ends the proof.
\end{proof}
As above, using density argument we deduce the following result for the general case: $U_0, \, V_0 \in L^2(0,1)$.
\begin{corollary}\label{??????}
Let $T>0$ and $(\alpha_1,\alpha_2)\in (0,1)\times(0,1)$ be given.
Let $\beta \in [max(\alpha_1,\alpha_2),1[$ and $\mu_i \geq max(0,2+2\alpha_i-3\beta)$.
Then, there exist two positive
constants $C$ and $s_0$ such that every solution $(U,V)$ of
\eqref{adjfirst}-\eqref{adjend} satisfies
\begin{align}
& \int_0^T\int_0^1 s\Theta(t) \left[ x^{\alpha_1}U_x^2(t,x)+x^{\alpha_2}V_x^2(t,x)\right]e^{2s\varphi (t,x)}\,dx dt \notag \\
& +\int_0^T\int_0^1 s^3 \Theta^3(t)\left[
x^{\mu_1}U^2(t,x)+x^{\mu_2}V^2(t,x)\right] e^{2s\varphi (t,x)}\,dx dt \notag \\
& \vspace*{2cm}\leq C\int_0^T\int_{\omega'} s^3
\Theta^3(t) \left[U^2(t,x)+V^2(t,x)\right]e^{2s\Phi(t,x)}
\,dxdt \quad \quad \mbox{for all}\,\,s\geq s_0. \label{Carl2degenrL2}
\end{align}
 \end{corollary}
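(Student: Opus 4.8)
The plan is to mirror the density argument used for the single--equation Corollary, now applied to the coupled system. First I would fix $(U_0,V_0)\in L^2(0,1)\times L^2(0,1)$ and, using the density of $H^1_{\alpha_1}\times H^1_{\alpha_2}$ in $L^2\times L^2$, choose a sequence $(U_0^n,V_0^n)$ in the regular space converging to $(U_0,V_0)$. Let $(U^n,V^n)$ denote the corresponding solutions of \eqref{adjfirst}-\eqref{adjend}. Applying the well--posedness estimate of Proposition \ref{estimsemigroup} to the differences $(U^m-U^n,V^m-V^n)$, exactly as in \eqref{estiminitialdata}, shows that the initial--data--to--solution map is Lipschitz into $X_T$; hence $(U^n,V^n)$ is Cauchy and converges to some $(U,V)$ in $X_T$, and a standard semigroup argument identifies $(U,V)$ as the solution associated with $(U_0,V_0)$.

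Next I would record two elementary weight comparisons on $(0,1)$. Since $\beta\geq\max(\alpha_1,\alpha_2)$ we have $2\alpha_i-\beta\leq\alpha_i$, and because $x\in(0,1)$ this gives $x^{\alpha_i}\leq x^{2\alpha_i-\beta}$; likewise $\mu_i\geq 2+2\alpha_i-3\beta$ yields $x^{\mu_i}\leq x^{2+2\alpha_i-3\beta}$. Applying Theorem \ref{carl01} to each regular solution $(U^n,V^n)$ and then weakening the left--hand--side weights via these inequalities, I obtain for every $n$
\begin{align*}
& \int_0^T\int_0^1 s\Theta \left[ x^{\alpha_1}(U^n_x)^2 + x^{\alpha_2}(V^n_x)^2 \right] e^{2s\varphi}\,dx\,dt \\
& + \int_0^T\int_0^1 s^3\Theta^3 \left[ x^{\mu_1}(U^n)^2 + x^{\mu_2}(V^n)^2 \right] e^{2s\varphi}\,dx\,dt \\
& \leq C \int_0^T\int_{\omega'} s^3\Theta^3 \left[ (U^n)^2 + (V^n)^2 \right] e^{2s\Phi}\,dx\,dt.
\end{align*}

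Finally I would pass to the limit $n\to\infty$. The decisive point is that every weight function appearing here is bounded on $(0,T)\times(0,1)$: since $d\geq 5$ one has $\psi<0$, hence $\varphi,\Phi<0$ with $\varphi,\Phi\to-\infty$ as $t\to 0^+,T^-$, so the exponentials decay fast enough to dominate the polynomial factors $s\Theta$ and $s^3\Theta^3$; and the hypothesis $\mu_i\geq 0$ guarantees $x^{\mu_i}$ stays bounded near $x=0$, so that $s\Theta\, x^{\alpha_i}e^{2s\varphi}$, $s^3\Theta^3 x^{\mu_i}e^{2s\varphi}$ and $s^3\Theta^3 e^{2s\Phi}$ all lie in $L^\infty((0,T)\times(0,1))$. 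The $X_T$--convergence then gives $U^n\to U$, $V^n\to V$ in $C([0,T],L^2)$ (controlling the zeroth--order and right--hand terms) and $x^{\alpha_i/2}U^n_x\to x^{\alpha_i/2}U_x$, $x^{\alpha_i/2}V^n_x\to x^{\alpha_i/2}V_x$ in $L^2((0,T)\times(0,1))$ (controlling the gradient terms, since $x^{\alpha_i}(U^n_x)^2=(x^{\alpha_i/2}U^n_x)^2$); multiplying by the bounded weights and using dominated convergence lets me pass to the limit on both sides and obtain \eqref{Carl2degenrL2}. The only genuine obstacle is this last step, and it hinges on the double role played by $\mu_i$: the bound $\mu_i\geq 2+2\alpha_i-3\beta$ is exactly what makes the weakened left side dominated by the estimate of Theorem \ref{carl01}, while $\mu_i\geq 0$ is exactly what keeps the weight $s^3\Theta^3 x^{\mu_i}e^{2s\varphi}$ bounded so that the limit survives under mere $X_T$--convergence; the condition $\mu_i\geq\max(0,\,2+2\alpha_i-3\beta)$ captures both requirements simultaneously.
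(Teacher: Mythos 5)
Your proposal is correct and follows essentially the same route as the paper: the paper itself disposes of this corollary with the phrase ``as above, using density argument,'' referring to the density proof of the single-equation corollary, and your argument reproduces exactly that scheme (approximation of the initial data in $H^1_{\alpha_1}\times H^1_{\alpha_2}$, continuous dependence as in \eqref{estiminitialdata}, weight weakening via $x^{\alpha_i}\leq x^{2\alpha_i-\beta}$ and $x^{\mu_i}\leq x^{2+2\alpha_i-3\beta}$, and passage to the limit using the boundedness of $s\Theta e^{2s\varphi}$, $s^3\Theta^3 x^{\mu_i}e^{2s\varphi}$ and $s^3\Theta^3 e^{2s\Phi}$). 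Your explicit justification of why the weights are bounded (negativity of $\psi$ and $\Psi$, and the role of $\mu_i\geq 0$ near $x=0$) is a correct elaboration of details the paper leaves implicit.
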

 %
%
%
To study of the null-controllability of the system \eqref{eq:1}-\eqref{eq:2}
we need to show the following Carleman estimate.
\begin{theorem} \label{carl1}
Let $T>0$ be given. Assume moreover that
\begin{align}
b_{21}\geq \mu \,\, \mbox{on} \, [0,T]\times \omega_1 \quad \mbox{for some}\,\, \omega_1\Subset \omega\,\, \mbox{and}\,\, \mu>0. \label{b2supGama}
\end{align}
 Then there exist two positive constants $C$ and
$s_0$ such that, every solution $(U,V)$ of
\eqref{adjfirst}-\eqref{adjend} satisfies for all $s\geq s_0$ the estimates
\begin{align}
& \int_0^T\int_0^1 s\Theta(t)\left[x^{2\alpha_1-\beta}U_x^2(t,x)+x^{2\alpha_2-\beta}V_x^2(t,x))\right]e^{2s\varphi (t,x)}\,dx dt\notag \\
& + \int_0^T\int_0^1 s^3 \Theta^3(t)
\left[x^{2+2\alpha_1-3\beta}U^2(t,x)+x^{2+2\alpha_2-3\beta}V^2(t,x)\right]e^{2s\varphi (t,x)}\,dx dt\notag
\\
& \leq C\int_0^T\int_\omega  U^2(t,x)\,dx dt. \label{carleman2}
\end{align}
\end{theorem}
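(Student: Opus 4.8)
The plan is to start from the two‑sided local estimate already proved for the coupled system and to remove the spurious local term in $V$ by exploiting the positivity assumption \eqref{b2supGama} on the coupling coefficient $b_{21}$. By Theorem \ref{carl01}, estimate \eqref{Carl2degenr}, the entire left‑hand side of \eqref{carleman2} is bounded by $C\int_0^T\int_{\omega'}s^3\Theta^3\left(U^2+V^2\right)e^{2s\Phi}\,dx\,dt$, where $\omega'\Subset\omega$ is at my disposal; I fix it so that $\omega'\Subset\omega_1$. Since $\sigma\ge 0$ gives $\phi=e^{\rho\sigma}\Theta\ge\Theta$, this local term is $\le C\int_0^T\int_{\omega'}s^3\phi^3\left(U^2+V^2\right)e^{2s\Phi}$, which places the whole reduction in the $\Phi$‑weights of the nondegenerate region $(a',1)\supset\omega_1$. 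The $U$‑part is harmless: on the compact set $\overline{\omega'}$ the weight $s^3\phi^3e^{2s\Phi}$ is bounded for each fixed $s\ge s_0$ (because $\Theta\to+\infty$ while $\Psi<0$ forces $\Theta^3e^{2s\Psi\Theta}\to0$ at $t=0,T$, and $x$ stays away from the degeneracy), so $\int_0^T\int_{\omega'}s^3\phi^3U^2e^{2s\Phi}\le C\int_0^T\int_\omega U^2$. Everything therefore reduces to estimating $\int_0^T\int_{\omega'}s^3\phi^3V^2e^{2s\Phi}$, and it is essential that this be done entirely in the $\Phi$‑weights, since $\varphi$ and $\Phi$ are not uniformly comparable and a residual $e^{2s\varphi}$‑weighted term could not be controlled on $(a',1)$.

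To bound this term I would use the first equation of \eqref{adjfirst}--\eqref{adjend}, written as $b_{21}V=-U_t+(x^{\alpha_1}U_x)_x-b_{11}U$, which holds on $\omega_1$ where $b_{21}\ge\mu>0$. Fixing a cut‑off $\zeta\in\mathcal{C}_c^\infty(\omega_1)$ with $0\le\zeta\le 1$ and $\zeta=1$ on $\omega'$, one has
\[
\mu\int_0^T\int_{\omega'}s^3\phi^3V^2e^{2s\Phi}\,dx\,dt\ \le\ \int_0^T\int_{\omega_1}\zeta^2 s^3\phi^3e^{2s\Phi}\,V\left(-U_t+(x^{\alpha_1}U_x)_x-b_{11}U\right)dx\,dt.
\]
I would then integrate by parts: in $t$ on the $U_t$‑term, where the boundary contributions at $t=0,T$ vanish because $e^{2s\Phi}\to0$, and replace the resulting $V_t$ by $(x^{\alpha_2}V_x)_x-b_{22}V-b_{12}U$ via the second equation; and in $x$ on the two second‑order terms $(x^{\alpha_1}U_x)_x$ and $(x^{\alpha_2}V_x)_x$, where the boundary contributions vanish because $\zeta$ is compactly supported in $(0,1)$. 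The result is a finite sum of integrals over $\omega_1$ of three types: pure $U^2$‑terms, mixed $VU$‑terms, and mixed $V_xU_x$‑, $V_xU$‑, $VU_x$‑terms, all carrying $\Phi$‑weights.

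Finally I would close the estimate by Young's inequality, splitting each mixed product so that the factor containing $V$ or $V_x$ carries the weight $s^3\phi^3e^{2s\Phi}$ or $s\phi e^{2s\Phi}$ with an arbitrarily small coefficient $\varepsilon$, while the factor containing $U$ or $U_x$ carries a weight that is fixed in $s$ and bounded on the compact set $\overline{\omega_1}$. Choosing the cut‑offs in \eqref{estimUV2} so that the cut‑off there equals $1$ on $\omega_1\Subset(a',1)$, the by‑products $\varepsilon\int_0^T\int_{\omega_1}\left(s\phi V_x^2+s^3\phi^3V^2\right)e^{2s\Phi}$ are absorbed, for $\varepsilon$ small, into the left‑hand side of \eqref{estimUV2}, whose weights now match exactly. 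The surviving $U$‑integrals, including the local $U_x^2$‑terms which I control by Caccioppoli's inequality (Lemma \ref{cacci}), all reduce to $C\int_0^T\int_\omega U^2$ because their weights are bounded on $\overline{\omega_1}$ at fixed $s$. Adding \eqref{estimUV2} to the estimate of the first paragraph and using the reduction of the local $U$‑term then yields \eqref{carleman2}.

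The main obstacle is precisely the weight bookkeeping in the integration by parts. Using $|\dot\Theta|\le c_2\Theta^2$ from \eqref{majorTheta1} and the degenerate bound \eqref{majorTheta2}, together with $|\partial_t\phi|\lesssim s\Theta\phi$, I must verify that every $V$‑ and $V_x$‑by‑product indeed appears multiplied by powers of $s$ and $\phi$ that make it of strictly lower order than, hence absorbable by, the $\Phi$‑weighted energy on the right‑hand side of \eqref{estimUV2}; equally, I must check that no term retains the degenerate weight $e^{2s\varphi}$ and that the localization by $\zeta$ keeps all surviving $U$‑weights bounded on $\overline{\omega_1}$, so that the single unweighted observation $\int_0^T\int_\omega U^2$ suffices on the right.
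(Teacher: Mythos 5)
Your skeleton is the paper's: apply Theorem \ref{carl01} with the local observation set placed inside $\omega_1$, bound the local $U$-term by the boundedness of $s^3\Theta^3e^{2s\Phi}$, and remove the local $V$-term by testing $b_{21}V=-U_t+(x^{\alpha_1}U_x)_x-b_{11}U$ against a cutoff weight, integrating by parts, substituting the second equation, and Young-splitting --- this is exactly Lemma \ref{lemma}. The genuine gap is in your absorption target, and your guiding principle is stated exactly backwards. The paper deliberately places every $V$- and $V_x$-factor under the \emph{degenerate} weight $e^{s\varphi}$ (see \eqref{lem1forc5}, where the factor is $s^{1/2}\Theta^{1/2}x^{\alpha_2-\beta/2}e^{s\varphi}V_x$, and the analogous splitting of $K_4$), so that the $\varepsilon$-by-products assemble into $\varepsilon J(V)$, which is absorbed by the \emph{global} $\varphi$-weighted left-hand side of \eqref{Carl2degenr} --- valid on all of $(0,1)$ with no cutoff, and perfectly able to control terms on $(a',1)$ since there the spatial weights $x^{2\alpha_2-\beta}$ and $x^{2+2\alpha_2-3\beta}$ are bounded below. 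By contrast, $\Phi$-weighted by-products are \emph{not} dominated by the degenerate energy: $\Phi-\varphi>0$ is proportional to $\Theta$, so $e^{2s(\Phi-\varphi)}\to\infty$ as $t\to 0^+,T^-$, and $\varepsilon\, s\phi V_x^2e^{2s\Phi}$ cannot be absorbed by $s\Theta x^{2\alpha_2-\beta}V_x^2e^{2s\varphi}$.

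This matters because your proposed absorber, the left side of \eqref{estimUV2}, is unavailable exactly where you need it. Its cutoff $\zeta=1-\xi$ equals $1$ only to the right of $b''$, with $\omega''=(a'',b'')\Subset\omega'$; your multiplier cutoff must equal $1$ on $\omega'$ (the observation set coming from Theorem \ref{carl01}) and be supported in $\omega_1$, so $\omega''\Subset\omega'\Subset\omega_1$ forces the transition region of $\zeta$ to sit \emph{inside} $\omega_1$. Hence the by-products $\varepsilon\int_0^T\int_{\omega_1}\left(s\phi V_x^2+s^3\phi^3 V^2\right)e^{2s\Phi}\,dx\,dt$ live partly where neither \eqref{estimUV2} (cutoff not equal to $1$) nor \eqref{Carl2degenr} (wrong weight, as above) can absorb them, and no rearrangement of the nested cutoffs removes the conflict. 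This is precisely why the paper's bookkeeping pushes the surplus weights $e^{s(2\Phi-\varphi)}$ and $e^{s(4\Phi-3\varphi)}$ onto the $U$-side, where they are bounded uniformly in $s$ thanks to $\frac43\Phi<\varphi<\Phi$ in \eqref{comparaison phi}, and then converts the resulting local $U_x^2$-integral $K$ into pure $U^2$-integrals not by Caccioppoli but by a second energy identity (multiplying \eqref{adjfirst} by $s^5\Theta^5\chi^2x^\mu e^{2s(2\Phi-\varphi)}U$), whose coupling term $K_4$ is again split with the $V$-factor in $\varphi$-weight. Your substitute for that step also does not apply as stated: Lemma \ref{cacci} carries the weights $e^{2s\varphi_i}$, not $s^5\phi^5e^{2s\Phi}$, so the weighted Caccioppoli variant you invoke is an additional lemma you would have to prove (plausible, but the support/weight mismatch above is the step that actually fails).
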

\begin{remark}
The assumption \eqref{b2supGama} can be replaced by \\
\hspace*{4,3cm} $b_{21}\leq -\mu$ \,\, on \, $[0,T]\times \omega_1$ \quad for some \, $\omega_1\Subset \omega$\, and\, $\mu>0$.
\end{remark}
Theorem \ref{carl1} is a consequence of Theorem \ref{carl01} applied to $\omega_1$ and the following lemma, see also the proofs of (\cite{Ca-Te}, Theorem 3.2), \cite{Liu} and \cite{bahm}.
\begin{lemma}\label{lemma} Suppose moreover that \eqref{b2supGama} holds. Then
for all $\varepsilon >0$ there exists a positive constant $C_\varepsilon >0$ such that every solution $(U,V)$ of
\eqref{adjfirst}-\eqref{adjend} satisfies
\begin{align}\label{estilemma}\int_0^T\int_{\omega_1} s^3\Theta^3 V^2
e^{2s\Phi}dx dt& \leq \varepsilon J(V)+C_\varepsilon\int_0^T\int_{\omega} U^2 dx dt,
\end{align}
\end{lemma}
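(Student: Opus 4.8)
The plan is to use the coupling hypothesis \eqref{b2supGama} to trade the localised term $V^2$ for $U$ by means of the first adjoint equation, and then to transfer all the $U$-information onto the observation region $\omega$ through integration by parts. First I fix a cut-off $\chi\in C^\infty_c(\omega)$ with $0\le\chi\le1$ and $\chi\equiv1$ on $\omega_1$, and use $b_{21}\ge\mu>0$ on $\omega_1$ to bound the left-hand side of \eqref{estilemma} by
\[
\frac1\mu\int_0^T\int_\omega\chi\,s^3\Theta^3\,b_{21}\,V^2\,e^{2s\Phi}\,dx\,dt .
\]
Writing $b_{21}V=-U_t+(x^{\alpha_1}U_x)_x-b_{11}U$ from \eqref{adjfirst} and substituting it for one of the two factors $b_{21}V$, the problem reduces to estimating
\[
\int_0^T\int_\omega\chi\,s^3\Theta^3\,V\bigl[-U_t+(x^{\alpha_1}U_x)_x-b_{11}U\bigr]e^{2s\Phi}\,dx\,dt .
\]

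Next I integrate by parts: in time on the term containing $U_t$, moving $\partial_t$ onto $\chi s^3\Theta^3 V e^{2s\Phi}$, and in space on the term containing $(x^{\alpha_1}U_x)_x$. The boundary contributions at $t=0,T$ vanish because $s^3\Theta^3 e^{2s\Phi}\to0$ there ($\Theta\to+\infty$ while $\Phi<0$), and the spatial boundary terms vanish since $\chi$ is compactly supported in $(0,1)$. The time derivative that now falls on $V$ is removed with the second adjoint equation \eqref{21}, $V_t=(x^{\alpha_2}V_x)_x-b_{22}V-b_{12}U$, after one further spatial integration by parts. The outcome is a finite sum of integrals over $(0,T)\times\omega$ whose integrands are products of $V$ or $V_x$ with $U$ or $U_x$ (and with the bounded coefficients $b_{ij}$, $\chi$ or $\chi_x$), each multiplied by a weight of the schematic form $s^a\Theta^b e^{2s\Phi}$.

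Each such term is split by Young's inequality, assigning the factor carrying $V$ the weight $e^{s\varphi}$ so that it is controlled by $\varepsilon$ times the matching piece of $J(V)$, while the factor carrying $U$ absorbs the excess weight. Here the comparison \eqref{comparaison phi}, $\tfrac43\Phi<\varphi<\Phi$, is decisive: it yields $2\Phi-\varphi<0$, so on the nondegenerate set $\omega$ the surviving $U$-weights have the form $s^a\Theta^b e^{2s(2\Phi-\varphi)}$ with $a\le b$, and are therefore bounded uniformly in $s\ge s_0$. For the zero-order contributions in $U$ this immediately gives a bound by $C_\varepsilon\int_0^T\int_\omega U^2$, while the zero-order coupling terms carrying $b_{ij}$ are first handled with the Hardy--Poincar\'e inequality (Lemma~\ref{Hardy2}).

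The delicate point, which I expect to be the main obstacle, lies in the terms containing a derivative $U_x$ — in particular the mixed second-order cross terms $\int_0^T\int_\omega\chi\,s^3\Theta^3\,U_x\,x^{\alpha_i}V_x\,e^{2s\Phi}$ coming from the time term (through $V_t\mapsto(x^{\alpha_2}V_x)_x$) and from the diffusion term of $U$. Because the multiplier unavoidably carries the heavier weight $e^{2s\Phi}$ while $J(V)$ is measured in the lighter $e^{2s\varphi}$, one cannot simultaneously send $V_x$ into $J(V)$ and reduce $U_x$ to $\int_\omega U^2$ by Caccioppoli: the latter reduction reintroduces a $V^2$-term whose weight is too heavy to be reabsorbed into $\varepsilon J(V)$. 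The way out is to keep the $U_x$-part in the $\Phi$-weight and route it through the local nondegenerate structure of $U$ on $\omega$, using the Fursikov--Imanuvilov estimate \eqref{classicFursikov} (whose weight is precisely $e^{2s\Phi}$) together with Caccioppoli's inequality (Lemma~\ref{cacci}), so that only the observation $\int_\omega U^2$ remains, whereas the $V_x$-part is kept in the $\varphi$-weight and absorbed by $\varepsilon J(V)$. Making the two weights cooperate through \eqref{comparaison phi} is where the real work lies; once this is done, choosing first $\varepsilon$ and then $s_0$ large enough yields \eqref{estilemma}.
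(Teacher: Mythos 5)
Your proposal tracks the paper's proof of Lemma \ref{lemma} faithfully up to the decisive point: the cut-off $\chi$, the substitution $b_{21}V=-U_t+(x^{\alpha_1}U_x)_x-b_{11}U$, the integrations by parts that invoke \eqref{21}, and the Young splittings assigning $e^{s\varphi}$ to every $V$-factor reproduce exactly the paper's identity \eqref{lem1forc4} with its terms $I_1,\dots,I_4$, and you correctly single out the mixed term $I_1$ as the obstruction; after the split it leaves $K=\int_0^T\int_0^1 s^{5}\Theta^{5}\chi^2 x^{2\alpha_i-2\alpha_2+\beta}e^{2s(2\Phi-\varphi)}U_x^2\,dx\,dt$, and you rightly rule out a direct Caccioppoli reduction of it. But your proposed fix --- ``keep the $U_x$-part in the $\Phi$-weight'' and invoke the Fursikov--Imanuvilov estimate \eqref{classicFursikov} --- fails, for two weight-bookkeeping reasons. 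First, the weight on $U_x^2$ is not at your disposal: once $V_x$ carries $e^{s\varphi}$ (as it must, to be absorbed into $\varepsilon J(V)$), the $U_x$-factor necessarily carries $e^{s(2\Phi-\varphi)}$; since $\varphi<\Phi$ one has $2\Phi-\varphi>\Phi$ with $\Phi-\varphi$ growing like $\Theta$, so $e^{2s(2\Phi-\varphi)}$ is exponentially \emph{heavier} than the weight $e^{2s\Phi}$ controlled on the left-hand side of \eqref{classicFursikov}. Second, even at weight $e^{2s\Phi}$, applying \eqref{classicFursikov} to the $U$-equation puts the source $f=-b_{11}U-b_{21}V$ on the right, reintroducing a global term $\int_0^T\int V^2e^{2s\Phi}\,dx\,dt$ whose weight again exceeds the $e^{2s\varphi}$ appearing in $J(V)$, hence is not absorbable.

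The paper's actual resolution --- the one genuinely new idea of the proof, and the one your proposal lacks --- is a \emph{second multiplier identity}: multiply \eqref{adjfirst} by $s^{5}\Theta^{5}\chi^2 x^{\mu}e^{2s(2\Phi-\varphi)}U$ with $\mu=2\alpha_i-\alpha_1-2\alpha_2+\beta$ and integrate by parts. This expresses $K$ itself through terms $K_1,K_2,K_3$ quadratic in $U$, supported in $\omega$ and carrying weights of the form $s^{7}\Theta^{7}e^{2s(2\Phi-\varphi)}$, which are bounded because $2\Phi-\varphi<\tfrac23\Phi<0$, plus a single coupling term $K_4$ in $UV$, which is split so that the $V$-factor carries $e^{s\varphi}$ and joins $\varepsilon^2 J(V)$ while the $U$-factor carries $e^{s(4\Phi-3\varphi)}$; the boundedness of that last weight is precisely where the lower bound $\tfrac43\Phi<\varphi$ of \eqref{comparaison phi} is used ($4\Phi-3\varphi<0$), a sign condition your write-up never checks. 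Without this step the argument stalls exactly where you yourself flag ``where the real work lies'': the proposal is strategically on target but has a genuine gap at its core, and the specific tool you name to fill it cannot do so.
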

where $\omega_1$ is defined in \eqref{b2supGama} and
$$J(V):=\int_0^T\int_0^1 \left[s\Theta(t) x^{2\alpha_2-\beta}V_x^2+
 s^3 \Theta^3(t) x^{2+2\alpha_2-3\beta}V^2\right]e^{2s\varphi (t,x)}\,dx dt
$$
\begin{proof}
Let $\chi\in\mathcal{C}^\infty(0,1)$ such that $supp \,\chi \subset
\omega$ and $\chi \equiv 1$ on $\omega_1$. Multiplying the equation
\eqref{adjfirst} by $s^3\Theta^3\chi e^{2s\Phi}  V$ and integrating, we obtain
\begin{align}\label{lem1forc1}
\int_0^T\int_0^1 \chi b_{21} s^3\Theta^3 e^{2s\Phi}  V^2 dx dt
  = & -\int_0^T\int_0^1 \chi s^3\Theta^3 e^{2s\Phi}VU_t\,dxdt + \int_0^T\int_0^1 \chi s^3\Theta^3 e^{2s\Phi}V(x^{\alpha_1}U_x)_x dxdt \notag \\
   & - \int_0^T\int_0^1 \chi b_{11}s^3\Theta^3 e^{2s\Phi}UV dxdt
\end{align}
Integrating by parts and using the equation \eqref{21}, we obtain
\begin{align}\label{lem1forc2}
\int_0^T\int_0^1\chi s^3\Theta^3 e^{2s\Phi}VU_t\,dxdt =
     &    \int_0^T\int_0^1 \chi x^{\alpha_2} s^3\Theta^3 e^{2s\Phi}U_xV_x\,dxdt
       + \int_0^T\int_0^1 x^{\alpha_2} s^3\Theta^3(\chi e^{2s\Phi})_x UV_x \,dxdt \notag \\
     &  + \int_0^T\int_0^1 \chi b_{12} s^3\Theta^3 e^{2s\Phi}U^2\,dxdt
       + \int_0^T\int_0^1 \chi b_{22} s^3\Theta^3 e^{2s\Phi}UV \,dxdt   \notag \\
     &  -\int_0^T\int_0^1 \chi s^3\left(\Theta^3 e^{2s\Phi}\right)_t UV \,dxdt ,
\end{align}
and
\begin{align}\label{lem1forc3}
& \int_0^T\int_0^1\chi s^3\Theta^3 e^{2s\Phi}V(x^{\alpha_1}U_x)_x\,dxdt =
       - \int_0^T\int_0^1 x^{\alpha_1}\chi s^3\Theta^3 e^{2s\Phi}U_xV_x\,dxdt \notag \\
& \hspace*{1cm}       + \int_0^T\int_0^1 s^3\Theta^3  x^{\alpha_1}(\chi e^{2s\Phi})_x UV_x \,dxdt
     + \int_0^T\int_0^1 s^3\Theta^3 (x^{\alpha_1}(\chi e^{2s\Phi})_x)_x UV\,dxdt.
\end{align}
So combining the identities \eqref{lem1forc1}-\eqref{lem1forc3}, we get
\begin{align}
& \int_0^T\int_0^1 b_{21} \chi s^3\Theta^3 e^{2s\Phi}  V^2 dx dt
\, = \,\, - \,\, \underset{I_1}{\underbrace{ \int_0^T\int_0^1 (x^{\alpha_1}+x^{\alpha_2})\chi s^3\Theta^3 e^{2s\Phi}U_xV_x\,dxdt}} \notag \\
& \hspace*{1.5cm}      + \, \underset{I_2}{\underbrace{ \int_0^T\int_0^1 (x^{\alpha_1}-x^{\alpha_2}) s^3\Theta^3(\chi e^{2s\Phi})_x UV_x \,dxdt}} \, - \, \underset{I_3}{\underbrace{  \int_0^T\int_0^1 b_{12}\chi s^3\Theta^3 e^{2s\Phi}U^2\,dxdt}}\notag \\
&  \hspace*{1.5cm}      +\, \underset{I_4}{\underbrace{  \int_0^T\int_0^1 \left[ s^3\chi\left(\Theta^3 e^{2s\Phi}\right)_t
+s^3\Theta^3(x^{\alpha_1}(\chi e^{2s\Phi})_x)_x
-(b_{11}+b_{22})\chi s^3\Theta^3 e^{2s\Phi}
           \right]UV \,dxdt}}.  \label{lem1forc4}
\end{align}
Now we estimate the  integrals $I_1,\, I_2,\, I_3 $ and $I_4.$
We have
\begin{align}
& \left|\int_0^T\int_0^1 x^{\alpha_i}\chi s^3\Theta^3 e^{2s\Phi}U_xV_x\,dxdt\right|   =
\left| \int_0^T\int_0^1 \left[s^{\frac12}\Theta^{\frac12} x^{\alpha_2-\frac{\beta}{2}}e^{s\varphi }V_x\right]\left[s^{\frac52}\Theta^{\frac52} \chi x^{\alpha_i-\alpha_2+\frac{\beta}{2}}e^{s(2\Phi-\varphi)} U_x\right]dx dt\right|\notag \\
& \leq \varepsilon \int_0^T\int_0^1 s\Theta x^{2\alpha_2-\beta}e^{2s\varphi }V^2_x\,dxdt
   + \frac{1}{4\varepsilon} \underset{K}{\underbrace{\int_0^T\int_0^1s^{5}\Theta^{5} \chi^2 x^{2\alpha_i-2\alpha_2+\beta}e^{2s(2\Phi-\varphi)} U_x^2 dx dt}}   . \label{lem1forc5}
\end{align}
The last integral $K$ should be estimated by an integral in $U^2$. For this,  we multiply the equation  \eqref{adjfirst}
 by $
s^{5}\Theta^{5}\chi^2x^{\mu}e^{2s(2\Phi-\varphi)}U$ where $\mu := 2\alpha_i-\alpha_1-2\alpha_2+\beta$, we integrate by parts and we obtain
\begin{align*}
K   = & \,\, \frac12 \underset{K_1}{\underbrace{\int_0^T\int_0^1s^5\left(\Theta^5e^{2s(2\Phi-\varphi)}\right)_t
\chi^2x^\mu  U^2 dx dt}}  \\
 &  +\frac12 \underset{K_2}{\underbrace{\int_0^T\int_0^1 s^5\Theta^5 (x^{\alpha_1}(\chi^2x^\mu e^{2s(2\Phi-\varphi)})_x)_xU^2 \, dxdt}}
 -\underset{K_3}{\underbrace{\int_0^T\int_0^1 b_{11}s^5\Theta^5 \chi^2x^\mu e^{2s(2\Phi-\varphi)}U^2 \, dxdt}} \\
 &  -\underset{K_4}{\underbrace{\int_0^T\int_0^1 b_{21} s^5\Theta^5 \chi^2x^\mu e^{2s(2\Phi-\varphi)}UV \, dxdt}}.
\end{align*}
Since $|\Theta^{'}|\leq C\Theta^2$ and  $supp \chi \subset \omega$
we have for $i\in \{1,2,3\}$
\begin{align*}
 & |K_i| \leq  C\int_0^T\int_\omega s^7\Theta^7 e^{2s(2\Phi-\varphi)} U^2 dx dt,
 \end{align*}
 For $i=4$ we have
 \begin{align*}
  |K_4| & =
 \int_0^T\int_0^1 [s^{\frac32}\Theta^{\frac32}x^{1+\alpha_2-\frac{3\beta}{2}} e^{s\varphi }V]
 [s^{\frac{7}{2}}\Theta^{\frac{7}{2}}b_{21}\chi^2x^{\mu-1-\alpha_2+\frac{3\beta}{2}}
 e^{s(4\Phi-3\varphi)}U ]dx dt  \notag \\
        & \leq  \varepsilon^2 \int_0^T\int_0^1
         s^3\Theta^3 x^{2+2\alpha_2-3\beta} e^{2s\varphi }V^2 dxdt
 + C_\varepsilon \int_0^T\int_\omega
        s^7\Theta^7
 e^{2s(4\Phi-3\varphi)} U^2 dx dt.
 \end{align*}
So, thanks to \eqref{comparaison phi} we have
\begin{align}
|K|\leq & \,\,\varepsilon^2 \int_0^T\int_0^1 s^3\Theta^3 x^{2+2\alpha_2-3\beta} e^{2s\varphi }V^2 dxdt
  + C_\varepsilon\int_0^T\int_\omega   U^2 dx dt. \label{lem1forc11}
\end{align}
From \eqref{lem1forc5}-\eqref{lem1forc11} we deduce the estimate
\begin{align}
|I_1|\leq \,\, 2\varepsilon \int_0^T\int_0^1 s\Theta x^{2\alpha_2-\beta}e^{2s\varphi }V^2_x\,dxdt
+ \frac\varepsilon2 \int_0^T\int_0^1 s^3\Theta^3 x^{2+2\alpha_2-3\beta} e^{2s\varphi }V^2 dxdt
  + C_\varepsilon \int_0^T\int_\omega   U^2 dx dt. \label{lem1forc12}
\end{align}
Similarly we have
\begin{align}
 |I_2| \leq  & \,\, C \int_0^T\int_{\omega} s^4\Theta^4 |UV_x|e^{2s\Phi} dxdt \nonumber
 \\
  \leq & \,\, \varepsilon \int_0^T\int_0^1 s\Theta x^{2\alpha_2-\beta}e^{2s\varphi }V^2_x\,dxdt +
C_\varepsilon \int_0^T\int_\omega   U^2 dx dt, \label{lem1forc13} \\
|I_3| \leq & \,\, C \int_0^T\int_\omega U^2 dx dt, \label{lem1forc15}\\
|I_4|  \leq  & \,\, C \int_0^T\int_{\omega} s^6\Theta^6 |UV|e^{2s\Phi} dxdt \nonumber
\\
\leq & \,\, \varepsilon \int_0^T\int_0^1 s^3\Theta^3 x^{2+2\alpha_2-3\beta}e^{2s\varphi }V^2\,dxdt +
C_{\varepsilon} \int_0^T\int_\omega U^2 dx dt. \label{lem1forc14}
\end{align}
Consequently, from the estimates \eqref{lem1forc12}-\eqref{lem1forc14}, we conclude that
\begin{align*}
\int_0^T\int_0^1 b_{21} \chi e^{2s\varphi }  V^2 dx dt \leq 3\varepsilon J(V) + C_{\varepsilon} \int_0^T\int_\omega U^2 dx dt.
\end{align*}
Finally, since $\chi \equiv 1$ on $\omega_1$, then using \eqref{b2supGama} we achieve the claim.
\end{proof}
As above, using a density argument we deduce the following result for the general case: $U_0,\, V_0\in L^2(0,1)$.
\begin{corollary} \label{corol1forc}
 Let $T>0$ be given. Assume moreover that \eqref{b2supGama} holds.
Let $(\alpha_1,\alpha_2)\in (0,1)\times(0,1)$,  $\beta \in [max(\alpha_1,\alpha_2),1[$ and $\mu_i \geq max(0,2+2\alpha_i-3\beta)$.
 Then, there exist two positive constants $C$ and
$s_0$ such that, every solution $(U,V)$ of
\eqref{adjfirst}-\eqref{adjend} satisfies, for all $s\geq s_0$ the estimates
\begin{align}
& \int_0^T\int_0^1 s\Theta(t)\left[x^{\alpha_1}U_x^2(t,x)+x^{\alpha_2}V_x^2(t,x))\right]e^{2s\varphi (t,x)}\,dx dt\notag \\
& + \int_0^T\int_0^1 s^3 \Theta^3(t)
\left[x^{\mu_1}U^2(t,x)+x^{\mu_2}V^2(t,x))\right]e^{2s\varphi (t,x)}\,dx dt\notag
\\
& \leq C\int_0^T\int_\omega  U^2(t,x)\,dx dt. \label{carlemanonef}
\end{align}
\end{corollary}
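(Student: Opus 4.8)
The plan is to reproduce, for the one-control estimate \eqref{carlemanonef}, exactly the density scheme already used to pass from Theorem~\ref{carl22_1eq} and Theorem~\ref{carl01} to their $L^2$ versions. The argument falls into two steps: a pointwise comparison of the weight powers that reduces the claim to Theorem~\ref{carl1} when the data are regular, followed by an approximation argument removing the regularity assumption.

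First I would treat regular data, say $U_0,V_0\in H^1_{\alpha_1}\times H^1_{\alpha_2}$. Since $x\mapsto x^p$ is nonincreasing in $p$ on $(0,1)$, the hypotheses $\beta\geq\max(\alpha_1,\alpha_2)$ and $\mu_i\geq\max(0,2+2\alpha_i-3\beta)$ yield the pointwise bounds $x^{\alpha_i}\leq x^{2\alpha_i-\beta}$ and $x^{\mu_i}\leq x^{2+2\alpha_i-3\beta}$ on $(0,1)$. Multiplying by the nonnegative factors $s\Theta e^{2s\varphi}$ and $s^3\Theta^3 e^{2s\varphi}$ and integrating shows that the left-hand side of \eqref{carlemanonef} is dominated termwise by the left-hand side of \eqref{carleman2}. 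As the two right-hand sides are the \emph{identical} quantity $C\int_0^T\int_\omega U^2\,dx\,dt$, the estimate \eqref{carlemanonef} for regular data follows immediately from Theorem~\ref{carl1} under the standing assumption \eqref{b2supGama}.

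Next I would pass to general data $U_0,V_0\in L^2(0,1)$. Choose $(U_0^n,V_0^n)\in H^1_{\alpha_1}\times H^1_{\alpha_2}$ converging to $(U_0,V_0)$ in $L^2\times L^2$ and let $(U^n,V^n)$ be the corresponding solutions of \eqref{adjfirst}-\eqref{adjend}. By the well-posedness estimate \eqref{estiminitialdata} of Proposition~\ref{estimsemigroup}, applied to the adjoint system, $(U^n,V^n)\to(U,V)$ in $X_T$; in particular $U^n\to U$ in $C([0,T],L^2)$, and $x^{\alpha_1/2}U^n_x\to x^{\alpha_1/2}U_x$ and $x^{\alpha_2/2}V^n_x\to x^{\alpha_2/2}V_x$ in $L^2((0,T)\times(0,1))$. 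The previous step gives \eqref{carlemanonef} for each $(U^n,V^n)$. Since the right-hand side carries no weight, $\int_0^T\int_\omega (U^n)^2\,dx\,dt\to\int_0^T\int_\omega U^2\,dx\,dt$ at once. For the left-hand side the decisive point, exactly as in the earlier density corollaries, is that the weights $s\Theta x^{\alpha_i}e^{2s\varphi}$ and $s^3\Theta^3 x^{\mu_i}e^{2s\varphi}$ are bounded on $[0,T]\times[0,1]$: indeed $\varphi<0$ on $(0,T)\times(0,1)$ because $\psi(x)=\lambda(x^{2-\beta}-d)<0$ for $d\geq5$, so $e^{2s\varphi}$ decays exponentially as $t\to0^+$ and $t\to T^-$ and overwhelms the polynomial blow-up of $\Theta$, while $\alpha_i>0$ and $\mu_i\geq0$ keep $x^{\alpha_i}$ and $x^{\mu_i}$ bounded. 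Passing to an a.e.-convergent subsequence and applying Fatou's lemma to the nonnegative left-hand side then bounds the left-hand side of \eqref{carlemanonef} for $(U,V)$ by $\liminf_n$ of the corresponding quantities for $(U^n,V^n)$, which are in turn bounded by the converging right-hand sides. This establishes \eqref{carlemanonef} in the limit.

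I expect no genuine obstacle: the argument is routine and essentially identical to the two $L^2$ corollaries already proved. The only point requiring care is the boundedness of the weight functions, which is what legitimizes both the termwise domination in the regular case and the passage to the limit; it rests solely on the sign of $\psi$ (guaranteed by $d\geq5$) together with $\mu_i\geq0$ and $\alpha_i>0$, and is precisely the observation invoked after Theorem~\ref{carl22_1eq}.
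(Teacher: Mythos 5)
Your proposal is correct and follows essentially the same route as the paper, which deduces Corollary~\ref{corol1forc} exactly ``as above'': weaken the weights via the pointwise bounds $x^{\alpha_i}\leq x^{2\alpha_i-\beta}$, $x^{\mu_i}\leq x^{2+2\alpha_i-3\beta}$ so that Theorem~\ref{carl1} applies to regular data, then approximate $(U_0,V_0)\in L^2\times L^2$ by $H^1_{\alpha_1}\times H^1_{\alpha_2}$ data and pass to the limit using the well-posedness estimate and the boundedness of $s\Theta e^{2s\varphi}$ and $s^3\Theta^3 x^{\mu_i}e^{2s\varphi}$. You in fact supply details the paper leaves implicit (the sign of $\psi$ justifying boundedness, and the Fatou/subsequence step), all of which are sound.
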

%
%
%
%
%
%
\section{Observability and null controllability of linear systems}
As a consequence of the Carleman
estimates established in the above section, we prove first a observability
inequality for the adjoint problem \eqref{adjfirst}-\eqref{adjend}
of problem \eqref{eq:1}-\eqref{eq:2}.
\begin{theorem}\label{observatos}
Let $T>0$ be given. Assume that \eqref{b2supGama} is satisfied. Then, there exists a positive constant $C$ such
that  every solution $(U,V)$ of \eqref{adjfirst}-\eqref{adjend}
satisfies
\begin{align}
\int_0^1\left[U^2(T,x)+V^2(T,x)\right]dx\leq
C\int_0^T\int_{\omega}U^2(t,x)dx dt. \label{observ1forc}
\end{align}
\end{theorem}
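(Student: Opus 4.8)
The plan is to derive \eqref{observ1forc} by combining a dissipation (energy) estimate for the adjoint system with the global Carleman estimate of Corollary \ref{corol1forc} and the Hardy--Poincar\'e inequality \eqref{Hardy1}. The Carleman estimate \eqref{carlemanonef} controls a space--time integral of $(U,V)$, measured in a degenerate weighted norm, by the observation $\int_0^T\int_\omega U^2$; an energy argument transfers this information to the single time slice $t=T$, and the Hardy--Poincar\'e inequality converts the controlled degenerate gradient term into a genuine $L^2$ bound near the degeneracy $x=0$. It suffices to argue for regular solutions and then pass to $U_0,V_0\in L^2(0,1)$ by density, as in Proposition \ref{estimsemigroup}.

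First I would establish the dissipation inequality. Multiplying \eqref{adjfirst} by $U$ and \eqref{21} by $V$, integrating over $(0,1)$ and adding, the boundary terms at $x=1$ vanish since $U(t,1)=V(t,1)=0$, and those at $x=0$ vanish because $x^{\alpha_i}U_x,\,x^{\alpha_i}V_x\in H^1(0,1)\subset C([0,1])$ while $U,V\to0$ as $x\to0$; hence
\begin{align*}
\frac12\frac{d}{dt}\int_0^1\big(U^2+V^2\big)\,dx+\int_0^1\big(x^{\alpha_1}U_x^2+x^{\alpha_2}V_x^2\big)\,dx=-\int_0^1\big(b_{11}U^2+b_{22}V^2+(b_{12}+b_{21})UV\big)\,dx.
\end{align*}
Discarding the nonnegative dissipation term and using $b_{ij}\in L^\infty$, this gives $E'(t)\le C\,E(t)$ for $E(t):=\int_0^1(U^2+V^2)(t,x)\,dx$, so Gronwall yields $E(T)\le e^{CT}E(t)$ for all $t\in[0,T]$. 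Integrating in $t$ over $[T/4,3T/4]$ then produces
\begin{align*}
\int_0^1\big(U^2+V^2\big)(T,x)\,dx\le\frac{2e^{CT}}{T}\int_{T/4}^{3T/4}\!\!\int_0^1\big(U^2+V^2\big)\,dx\,dt.
\end{align*}

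Next I would absorb the middle-interval integral by \eqref{carlemanonef}. On the compact subinterval $[T/4,3T/4]$ the function $\Theta$ is bounded above and below by positive constants, and since $\psi(x)=\lambda(x^{2-\beta}-d)\ge-\lambda d$ one has $\varphi=\Theta\psi\ge-\lambda d\,\Theta_{\max}$, whence $s\Theta\,e^{2s\varphi}\ge c>0$ there. Thus the gradient part of the left-hand side of \eqref{carlemanonef} dominates $\int_{T/4}^{3T/4}\int_0^1(x^{\alpha_1}U_x^2+x^{\alpha_2}V_x^2)\,dx\,dt$, giving
\begin{align*}
\int_{T/4}^{3T/4}\!\!\int_0^1\big(x^{\alpha_1}U_x^2+x^{\alpha_2}V_x^2\big)\,dx\,dt\le C\int_0^T\!\!\int_\omega U^2\,dx\,dt.
\end{align*}
To recover the unweighted norm, I would apply \eqref{Hardy1} with $\gamma=\alpha_i<1$ slicewise in $t$: because $x^{\alpha_i-2}\ge1$ on $(0,1]$,
\begin{align*}
\int_0^1U^2\,dx\le\int_0^1x^{\alpha_1-2}U^2\,dx\le C_{\alpha_1}\int_0^1x^{\alpha_1}U_x^2\,dx,
\end{align*}
and similarly for $V$; integrating in $t$ and chaining the three displays yields \eqref{observ1forc}.

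The main obstacle is conceptual rather than computational: the Carleman estimate furnishes only the \emph{degenerate} weighted energy, so the plain $L^2$ norm cannot be read off directly, and the weight $x^{\alpha_i}$ collapses at $x=0$. The Hardy--Poincar\'e inequality \eqref{Hardy1}, valid precisely because $\alpha_i<1$ and $U(t,0)=V(t,0)=0$, is exactly the device that trades the controlled term $\int x^{\alpha_i}U_x^2$ for $\int U^2$ near the degeneracy. The remaining technical points are the vanishing of the weighted boundary terms in the energy identity and the strict positivity of $s\Theta\,e^{2s\varphi}$ on $[T/4,3T/4]\times(0,1)$, both of which follow from the properties of $\varphi$ and the weighted Sobolev embedding recorded above.
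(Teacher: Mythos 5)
Your proof is correct, and while it follows the same overall skeleton as the paper (energy inequality plus Gronwall, time-averaging over $[T/4,3T/4]$, the Carleman estimate \eqref{carlemanonef}, and Hardy--Poincar\'e), it routes the details differently. The paper multiplies \eqref{adjfirst} and \eqref{21} by $U_t$ and $V_t$ and propagates the \emph{weighted gradient} energy $F(t)=\int_0^1\left[x^{\alpha_1}U_x^2+x^{\alpha_2}V_x^2\right]dx$: to close the resulting differential inequality it must already invoke Hardy--Poincar\'e once (to dominate $\int_0^1(U^2+V^2)\,dx$ by $F(t)$), then after Gronwall and averaging it bounds $\int_{T/4}^{3T/4}F(t)\,dt$ by the gradient part of \eqref{carlemanonef} exactly as you do, and finally applies Hardy--Poincar\'e a second time at the single slice $t=T$ to convert $F(T)$ into the $L^2$ norm. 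You instead multiply by $U$ and $V$ and propagate the plain $L^2$ energy $E(t)$, whose Gronwall inequality closes trivially because the degenerate dissipation term is nonnegative and can simply be discarded; Hardy--Poincar\'e then enters only once, slicewise on the middle interval, to pass from $E(t)$ to the weighted gradient controlled by the Carleman estimate. Your version is slightly leaner and needs less regularity: the $U,V$ multiplier makes sense for mild solutions, whereas the paper's identity presupposes $U_t,V_t\in L^2$ and the vanishing of boundary terms of the form $\left[x^{\alpha_i}U_xU_t\right]_{x=0}^{1}$, so it too implicitly relies on regular data plus density (which your closing remark makes explicit). What the paper's route buys is the stronger intermediate fact that the final-time gradient energy $F(T)$ is itself observable, not merely $E(T)$, but that extra information is not needed for \eqref{observ1forc}. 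Two cosmetic points: fix a single value $s=s_0$ when invoking \eqref{carlemanonef} so the constant $c$ with $s\Theta e^{2s\varphi}\geq c>0$ on $[T/4,3T/4]\times(0,1)$ is well defined, and in your boundary-term justification the weight attached to $V_x$ should be $x^{\alpha_2}$, not $x^{\alpha_i}$ for both components.
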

\begin{proof}
Multiplying the equations \eqref{adjfirst} and \eqref{21}
 respectively by $U_t$ and $V_t$ and integrating  over $(0,1)$ the sum  of the new equations we obtain
\begin{align*}
0=&\int_0^1\left[U_t^2+V_t^2\right]dx -\left[x^{\alpha_1}
U_xU_t\right]_{x=0}^1-\left[x^{\alpha_2} V_xV_t\right]_{x=0}^1
\\
&+\int_0^1 b_{11}U U_t\,dx +\int_0^1 b_{22}V V_t+\int_0^1 b_{21}V U_t\,dx\\
&+\int_0^1 b_{12}U V_t\,dx+\frac{1}{2}\frac{d}{dt}\int_0^1[x^{\alpha_1}U_x^2+x^{\alpha_2}V_x^2]\,dx.
\end{align*}
Using the Young's inequality we obtain
\begin{align*}
 \frac{1}{2}\frac{d}{dt}\int_0^1[x^{\alpha_1}U_x^2+x^{\alpha_2}V_x^2]\,dx &\leq \int_0^1(b_{11}^2+b_{12}^2)U^2dx +\int_0^1(b_{22}^2+b_{21}^2)V^2dx\\
&\leq C\int_0^1(U^2(t,x)+V^2(t,x))\,dx\\
&\leq C\int_0^1[x^{\alpha_1 -2}U^2(t,x)+x^{\alpha_2 -2}V^2(t,x)]dx.
 \end{align*}
Hence, using the Hardy-Poincar\'e inequality \eqref{Hardy} one has
\begin{align*}
\frac{d}{dt}\int_0^1[x^{\alpha_1}U_x^2+x^{\alpha_2}V_x^2]\,dx \leq C_0 \int_0^1[x^{\alpha_1}U_x^2+x^{\alpha_2}V_x^2]\,dx
\end{align*}
Hence
\begin{align*}
\frac{d}{dt}\left\{e^{-C_0t}\int_0^1[x^{\alpha_1}U_x^2+x^{\alpha_2}V_x^2]dx\right\}&\leq 0.
 \end{align*}
 Consequently, the function $t\longmapsto
e^{-C_0t}\int_0^1[x^{\alpha_1}U_x^2+x^{\alpha_2}V_x^2]dx $ is not increasing. Thus,
$$\int_0^1[x^{\alpha_1}U_x^2(T,x)+x^{\alpha_2}V_x^2(T,x)]dx\leq
e^{C_0T}\int_0^1[x^{\alpha_1}U_x^2(t,x)+x^{\alpha_2}V_x^2(t,x)]dx.
$$
Integrating over
$[\frac{T}{4},\frac{3T}{4}]$ and using the Carleman estimate \eqref{carlemanonef} one obtains
\begin{align*}
\int_0^1[x^{\alpha_1}U_x^2(T,x)+x^{\alpha_2}V_x^2(T,x)]dx&\leq \frac{2e^{C_0T}}{T}\int_{\frac{T}{4}}^{\frac{3T}{4}}\int_0^1[x^{\alpha_1}U_x^2(t,x)+x^{\alpha_2}V_x^2(t,x)]dx dt
\nonumber
\\  &\leq C_T \int_{\frac{T}{4}}^{\frac{3T}{4}}\int_0^1s\Theta e^{2s\varphi}[x^{\alpha_1}U_x^2(t,x)+x^{\alpha_2}V_x^2(t,x)]dx dt
\nonumber
\\
&\leq C_T\int_0^T\int_\omega U^2(t,x)
dx dt,
\end{align*}
On the other hand, using hardy-Poincar\'e inequality one gets
\begin{align*}
\int_0^1[U^2(T,x)+V^2(T,x)]dx &\leq \int_0^1[x^{\alpha_1-2}U^2(T,x)+x^{\alpha_2-2}V^2(T,x)]dx
\\
& \leq C \int_0^1[x^{\alpha_1}U_x^2(T,x)+x^{\alpha_2}V_x^2(T,x)]dx
\nonumber
\end{align*}
This ends the proof.
\end{proof}
By Theorem \ref{observatos} and a classical argument one can deduce the controllability result
\begin{theorem}
If
the assumption \eqref{b2supGama} is satisfied, then the degenerate coupled system \eqref{eq:1}-\eqref{eq:2}
is null controllable.
\end{theorem}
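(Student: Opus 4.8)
The plan is to deduce the final theorem from the observability inequality \eqref{observ1forc} by the classical duality argument between observability and null controllability (the Hilbert Uniqueness Method). The entire analytic difficulty has already been absorbed into Theorem \ref{observatos}: it produces a quantitative estimate of the full final energy of the adjoint state by the single observation of $U$ on $\omega$. What remains is the standard functional-analytic passage from this estimate to the existence of a control. Concretely, I would recall that, in the evolution-family setting of Proposition \ref{estimsemigroup}, null controllability of \eqref{eq:1}-\eqref{eq:2} at time $T$ is equivalent to the existence of a constant $C$ for which every solution $(U,V)$ of the adjoint problem \eqref{adjfirst}-\eqref{adjend} obeys $\|(U,V)|_{t=T}\|_{L^2\times L^2}^2\le C\int_0^T\int_\omega U^2\,dx\,dt$, which is exactly \eqref{observ1forc}.

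To realize this equivalence I would build the control by minimization. For fixed $(u_0,v_0)\in L^2(0,1)\times L^2(0,1)$, define on $L^2(0,1)\times L^2(0,1)$ the functional
$$J(U^T,V^T)=\tfrac12\int_0^T\int_\omega U^2\,dx\,dt+\int_0^1\bigl(u_0\,U|_{t=0}+v_0\,V|_{t=0}\bigr)\,dx,$$
where $(U,V)$ is the adjoint solution of \eqref{adjfirst}-\eqref{adjend} carrying the data $(U^T,V^T)$ at $t=T$. Continuity and strict convexity of $J$ follow from the energy estimate $\int_0^T\int_\omega U^2\le C'\|(U^T,V^T)\|^2$ and the Cauchy--Schwarz bound on the linear term. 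Coercivity is precisely where Theorem \ref{observatos} enters: since \eqref{observ1forc} gives $\int_0^T\int_\omega U^2\ge C^{-1}\|(U^T,V^T)\|^2$, one obtains $J(U^T,V^T)\ge \tfrac{1}{2C}\|(U^T,V^T)\|^2-\|(u_0,v_0)\|\,\|(U^T,V^T)\|$, so $J$ is coercive. Hence $J$ admits a unique minimizer $(\hat U^T,\hat V^T)$, and I would set $h:=\hat U\,1_\omega$, where $\hat U$ is the first component of the corresponding adjoint trajectory.

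Next I would verify that $h$ is the desired control. Writing the Euler--Lagrange equation for the minimizer and combining it with the duality identity obtained by multiplying \eqref{eq:1}-\eqref{eq:2} by the adjoint solution and integrating by parts, one finds that the controlled state $(u,v)$ satisfies $\int_0^1(u(T)W^T+v(T)Z^T)\,dx=0$ for every choice of adjoint final data $(W^T,Z^T)$; this forces $u(T)=v(T)=0$, i.e. null controllability, together with the quantitative bound $\|h\|_{L^2((0,T)\times\omega)}\le \sqrt{C}\,\|(u_0,v_0)\|_{L^2\times L^2}$ with $C$ the observability constant. To make the integrations by parts rigorous I would first run this argument for regular data $(u_0,v_0)\in H^1_{\alpha_1}\times H^1_{\alpha_2}$, using the regularity in Proposition \ref{estimsemigroup}(ii), and then extend to $L^2\times L^2$ data by density, exactly as in the corollaries of Section 3.

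The main obstacle is not located in this final theorem at all, but upstream. The genuine difficulty is the observability inequality \eqref{observ1forc} itself, and within it the single-force Carleman estimate \eqref{carlemanonef}, where the hypothesis \eqref{b2supGama} on $b_{21}$ is used through Lemma \ref{lemma} to recover the unobserved component $V$ from the observation of $U$ alone, on top of the new degenerate Carleman estimates of Theorems \ref{carl_1eq} and \ref{carl01} built on the Hardy--Poincar\'e inequality \eqref{Hardy1} for negative exponents. Granting those, the only mildly delicate point in the present reduction is the regularity needed to justify the duality identity and the orientation bookkeeping of the adjoint (a harmless time reversal $t\mapsto T-t$, since the coefficients $b_{ij}$ are arbitrary $L^\infty$ functions); both are routine and are handled by the density procedure just described, as in \cite{bahm}, \cite{Ca-Te} and \cite{Fursikov}.
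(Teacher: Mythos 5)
Your proposal is correct and is precisely the ``classical argument'' the paper invokes without writing out: the paper deduces the theorem from the observability inequality \eqref{observ1forc} of Theorem \ref{observatos} by the standard duality/HUM passage, which is exactly your minimization of $J$, coercivity via observability, Euler--Lagrange plus the duality identity, and the density step for regularity. Nothing in your argument deviates from or adds a gap to the paper's intended route.
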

\section{Appendix 1}
As in \cite{bouss}, \cite{Ca-Te}, \cite{bahm}, we give the proof of the Caccioppoli's inequality for degenerate coupled systems with two different diffusion coefficients.
\begin{lemma}\label{cacci}
Let $\omega' \Subset\omega$. Then there exists a positive constant C such that
\begin{align*}
\int_0^T\int_{\omega'} \left[
U_x^2(t,x)+V_x^2(t,x)\right]e^{2s\varphi_i}dx dt  \leq
C\int_0^T\int_{\omega} \left[ U^2(t,x)+V^2(t,x)\right]e^{2s\varphi_i}dx dt.
\end{align*}
\end{lemma}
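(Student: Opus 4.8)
The plan is to run the classical localized energy (Caccioppoli) argument away from the degeneracy point $x=0$, where both operators are uniformly parabolic. First I would fix a cut-off $\chi\in\mathcal{C}^\infty(0,1)$ with $0\le\chi\le1$, $\chi\equiv1$ on $\omega'$ and $\mathrm{supp}\,\chi\subset\omega\Subset(0,1)$. Multiplying the first adjoint equation \eqref{adjfirst} by $\chi^2 e^{2s\varphi}U$ and integrating over $(0,T)\times(0,1)$, I would integrate by parts once in $t$ and once in $x$. The time integration by parts produces no boundary contribution, because $\Theta(t)\to+\infty$ as $t\to0^+,T^-$ while $\psi<0$ on $(0,1)$ (recall $\psi=\lambda(x^{2-\beta}-d)$ with $d\ge5$), so $e^{2s\varphi}\to0$ at $t=0,T$; the spatial integration by parts produces no boundary term either, since $\chi$ vanishes near $x=0$ and $x=1$, which also neutralizes the degeneracy of $x^{\alpha_1}$ at the origin. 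This yields the identity
\begin{align*}
\int_0^T\!\!\int_0^1 \chi^2 e^{2s\varphi}x^{\alpha_1}U_x^2\,dx\,dt
&= \tfrac12\int_0^T\!\!\int_0^1 \chi^2 (e^{2s\varphi})_t\,U^2\,dx\,dt \\
&\quad - \int_0^T\!\!\int_0^1 (\chi^2 e^{2s\varphi})_x\,x^{\alpha_1}UU_x\,dx\,dt \\
&\quad - \int_0^T\!\!\int_0^1 \chi^2 e^{2s\varphi}(b_{11}U^2+b_{21}UV)\,dx\,dt,
\end{align*}
so that the weighted gradient of $U$ is expressed through terms that are either already of order $U^2$ (or $UV$) or the cross term in $UU_x$.

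The central, and only delicate, step is to absorb the cross term $\int(\chi^2 e^{2s\varphi})_x x^{\alpha_1}UU_x$ back into the left-hand side by Young's inequality. Writing $(\chi^2 e^{2s\varphi})_x=2(\chi_x+s\varphi_x\chi)\chi\,e^{2s\varphi}$, Young's inequality bounds this term by $\tfrac12\int\chi^2 e^{2s\varphi}x^{\alpha_1}U_x^2$ plus $C\int (\chi_x+s\varphi_x\chi)^2 e^{2s\varphi}x^{\alpha_1}U^2$; the first piece is absorbed on the left. For the second piece, and likewise for the time term $\tfrac12\chi^2(e^{2s\varphi})_t=s\dot{\Theta}\psi\,\chi^2 e^{2s\varphi}$, the key observation is that every coefficient is supported in $\omega$, where $x^{\alpha_1}$, $\chi$, $\chi_x$ and $\psi_x=\lambda(2-\beta)x^{1-\beta}$ are bounded while $\psi\le-\delta<0$. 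Using $\varphi_x=\Theta\psi_x$ and $|\dot{\Theta}|\le c_2\Theta^2$ from Remark \ref{rk1}, the worst factors take the form $s^2\Theta^2 e^{2s\Theta\psi}$ and $s\Theta^2 e^{2s\Theta\psi}$, and since $\sup_{\tau>0}\tau^m e^{-2s\delta\tau}$ is finite and uniformly bounded for $s$ large, all these coefficients are dominated by a constant independent of $s$. Hence the identity collapses to $\int\!\!\int \chi^2 e^{2s\varphi}x^{\alpha_1}U_x^2\le C\int\!\!\int_\omega e^{2s\varphi}(U^2+V^2)$, the $V^2$ entering only through the coupling term $b_{21}UV$ (again via Young and the $L^\infty$ bound on $b_{21}$).

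Finally I would repeat the identical computation for the second adjoint equation \eqref{21}, obtaining $\int\!\!\int \chi^2 e^{2s\varphi}x^{\alpha_2}V_x^2\le C\int\!\!\int_\omega e^{2s\varphi}(U^2+V^2)$ (now $U^2$ entering through $b_{12}UV$), and add the two estimates. On $\omega'\Subset(0,1)$ one has $\chi\equiv1$ and $x^{\alpha_i}\ge c>0$, so $U_x^2+V_x^2\le c^{-1}\chi^2(x^{\alpha_1}U_x^2+x^{\alpha_2}V_x^2)$ there, which upgrades the left-hand sides to $\int\!\!\int_{\omega'}(U_x^2+V_x^2)e^{2s\varphi}$ and yields exactly the claimed inequality. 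I expect no essential obstacle beyond the uniform-in-$s$ control of the weight coefficients described above; the $L^\infty$ bounds on the $b_{ij}$ dispose of the zeroth-order and coupling contributions routinely.
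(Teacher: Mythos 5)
Your proposal is correct and follows essentially the same route as the paper: a cut-off $\chi$ supported in $\omega$ with $\chi\equiv1$ on $\omega'$, testing the adjoint equations against $\chi^2 e^{2s\varphi}U$ and $\chi^2 e^{2s\varphi}V$ (the paper packages this as $0=\int_0^T\frac{d}{dt}\int_0^1\chi^2(U^2+V^2)e^{2s\varphi}\,dx\,dt$), with the weight vanishing at $t=0,T$ and the boundedness of $s^m\Theta^m e^{2s\varphi}$ on $\mathrm{supp}\,\chi$ controlling all coefficient terms, and finally $x^{\alpha_i}\ge c>0$ on $\omega'$ to recover the unweighted gradients. The only (immaterial) difference is that you absorb the cross term $\int(\chi^2e^{2s\varphi})_x x^{\alpha_1}UU_x$ by Young's inequality after one integration by parts, whereas the paper integrates by parts once more so that all derivatives fall on $\chi^2e^{2s\varphi}$, leaving pure $U^2$, $V^2$ and $UV$ terms.
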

\begin{proof}
Let $\chi\in\mathcal{C}^\infty(0,1)$ such that $supp\,\chi \subset
\omega$ and $\chi \equiv 1$ on $\omega'.$ We have
\begin{align*}
0=&\int_0^T\frac{d}{dt} \left[ \int_0^1\chi^2
(U^2+V^2) e^{2s\varphi_i}dx \right] dt\\
=&
-2\int_0^T\int_0^1  \chi^2 x^{\alpha_1}U_x^2 e^{2s\varphi_i}dx dt
-2\int_0^T\int_0^1  \chi^2 x^{\alpha_2}V_x^2 e^{2s\varphi_i}dx
\\
&+ \int_0^T\int_0^1  (x^{\alpha_1}(\chi^2 e^{2s\varphi_i})_x)_x  U^2 dx +
   \int_0^T\int_0^1  (x^{\alpha_2}(\chi^2 e^{2s\varphi_i})_x)_x  V^2 dx
   \\
& -2\int_0^T\int_0^1 b_{11}\chi^2  U^2 e^{2s\varphi_i} dxdt
  -2\int_0^T\int_0^1 b_{22}\chi^2  V^2e^{2s\varphi_i}dxdt
\\
& 2\int_0^T\int_0^1 s \dot{\varphi_i}\chi^2(U^2+V^2)e^{2s\varphi_i}dxdt
 -2\int_0^T\int_0^1 (b_{12}+b_{21})\chi^2  U V e^{2s\varphi_i}dx dt
\end{align*}
Therefore, since $\chi$ is supported in $\omega$ and $\chi\equiv 1$ in $\omega'$ then, using Young inequality one obtains
\begin{align*}
\int_0^T\int_{\omega'}  (U_x^2+ V_x^2) e^{2s\varphi_i}dxdt
& \leq C \int_0^T\int_0^1  \chi^2 (x^{\alpha_1}U_x^2+x^{\alpha_2} V_x^2) e^{2s\varphi_i}dxdt
\\
&\leq C \int_0^T\int_\omega
(U^2+V^2)e^{2s\varphi_i}dxdt.
\end{align*}
 This ends the proof.
\end{proof}
\section{Appendix 2}
\begin{lemma}\label{Hardy2}
For all $\gamma <1$ and all $v$ locally absolutely continuous on $(0,1]$, continuous at $0$ and satisfying $v(0)=0$ and $\int_0^1x^\gamma v_x^2dx <+\infty$ the following Hardy-Poincar\'e inequality holds
\begin{align}\label{Hardy}
 \int_0^1x^{\gamma-2}v^2dx\leq C_\gamma \int_0^1x^\gamma v_x^2dx \quad\quad \mbox{where}\,\,\,\,
C_\gamma=\frac{4}{(1-\gamma)^2}
\end{align}
\end{lemma}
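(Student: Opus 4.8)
The plan is to prove \eqref{Hardy} by the classical Hardy-type argument: integrate by parts so as to transfer one factor of the weight from $v^2$ onto the product $v v_x$, and then close the estimate with a weighted Cauchy--Schwarz inequality. The only genuine subtlety is the behaviour of the boundary term at $x=0$, which I would control by a preliminary pointwise bound on $v$ combined with a truncation at $x=\varepsilon$.

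First I would record a pointwise estimate for $v$ near the origin. Since $v$ is continuous at $0$ with $v(0)=0$ and locally absolutely continuous on $(0,1]$, and since $\int_0^1 t^\gamma v_x^2\,dt<+\infty$ with $\gamma<1$, the weighted Cauchy--Schwarz inequality gives, for every $x\in(0,1]$,
\[
|v(x)|^2=\Big|\int_0^x v_x(t)\,dt\Big|^2\le\Big(\int_0^x t^{-\gamma}\,dt\Big)\Big(\int_0^x t^{\gamma}v_x^2\,dt\Big)=\frac{x^{1-\gamma}}{1-\gamma}\int_0^x t^{\gamma}v_x^2\,dt,
\]
where the first integral converges precisely because $\gamma<1$; the same bound (via $\int_0^x|v_x|\,dt<\infty$) justifies the representation $v(x)=\int_0^x v_x\,dt$ used above. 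In particular $x^{\gamma-1}v(x)^2\le\frac{1}{1-\gamma}\int_0^x t^{\gamma}v_x^2\,dt\to0$ as $x\to0^+$, which is exactly the decay I will need to kill the boundary contribution at the origin.

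Next I would integrate by parts on $[\varepsilon,1]$, writing $x^{\gamma-2}=\frac{d}{dx}\big(\tfrac{x^{\gamma-1}}{\gamma-1}\big)$, which yields
\[
\int_\varepsilon^1 x^{\gamma-2}v^2\,dx=\Big[\tfrac{x^{\gamma-1}}{\gamma-1}v^2\Big]_\varepsilon^1-\frac{2}{\gamma-1}\int_\varepsilon^1 x^{\gamma-1}vv_x\,dx.
\]
Since $\gamma-1<0$, the endpoint term at $x=1$ equals $\tfrac{1}{\gamma-1}v(1)^2\le0$ and may be discarded, while the endpoint term at $x=\varepsilon$ is $\tfrac{\varepsilon^{\gamma-1}}{1-\gamma}v(\varepsilon)^2$, which tends to $0$ by the preliminary bound. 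Writing $x^{\gamma-1}vv_x=\big(x^{(\gamma-2)/2}v\big)\big(x^{\gamma/2}v_x\big)$ and applying Cauchy--Schwarz gives $\int_\varepsilon^1 x^{\gamma-1}vv_x\,dx\le A_\varepsilon^{1/2}B^{1/2}$, where $A_\varepsilon:=\int_\varepsilon^1 x^{\gamma-2}v^2\,dx<+\infty$ and $B:=\int_0^1 x^{\gamma}v_x^2\,dx$ (using $\int_\varepsilon^1 x^\gamma v_x^2\,dx\le B$). Hence $A_\varepsilon\le\delta_\varepsilon+\tfrac{2}{1-\gamma}A_\varepsilon^{1/2}B^{1/2}$ with $\delta_\varepsilon:=\tfrac{\varepsilon^{\gamma-1}}{1-\gamma}v(\varepsilon)^2\to0$.

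Finally, solving this quadratic inequality for $A_\varepsilon^{1/2}$ gives $A_\varepsilon^{1/2}\le\tfrac{1}{1-\gamma}B^{1/2}+\big(\tfrac{1}{(1-\gamma)^2}B+\delta_\varepsilon\big)^{1/2}$, and letting $\varepsilon\to0^+$ (so that $A_\varepsilon\uparrow\int_0^1 x^{\gamma-2}v^2\,dx$ by monotone convergence while $\delta_\varepsilon\to0$) yields $\big(\int_0^1 x^{\gamma-2}v^2\,dx\big)^{1/2}\le\tfrac{2}{1-\gamma}B^{1/2}$, i.e. \eqref{Hardy} with the stated sharp constant $C_\gamma=4/(1-\gamma)^2$. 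I expect the main obstacle to be precisely the sign of the boundary term at the origin: it does \emph{not} carry a favourable sign, so it cannot simply be dropped, and it is the truncation at $\varepsilon$ together with the pointwise decay from the first step that makes the argument rigorous for every admissible $v$, in particular without assuming a priori that $\int_0^1 x^{\gamma-2}v^2\,dx$ is finite.
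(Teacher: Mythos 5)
Your proof is correct, but it follows a genuinely different route from the paper. The paper avoids integration by parts altogether: it writes $v(x)=\int_0^x y^{\delta/2}v'(y)\,y^{-\delta/2}\,dy$ with the splitting parameter $\delta=\frac{\gamma+1}{2}$, applies Cauchy--Schwarz inside, and then uses Fubini--Tonelli to exchange the order of integration, computing $\int_y^1 x^{\gamma-\delta-1}\,dx\le\frac{1}{\delta-\gamma}\,y^{\gamma-\delta}$; since $(1-\delta)(\delta-\gamma)=\frac{(1-\gamma)^2}{4}$ for this choice of $\delta$, the constant $C_\gamma=4/(1-\gamma)^2$ falls out directly. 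Because all integrands are nonnegative, Tonelli applies unconditionally, so the paper needs no truncation, no boundary terms, and no a priori finiteness of $\int_0^1 x^{\gamma-2}v^2\,dx$ --- the one-line computation is valid even if that integral is infinite (in which case the conclusion is that it is not). Your integration-by-parts argument is the classical Hardy route, and you correctly identified and handled its two delicate points: the boundary term at the origin does not have a favourable sign, and you dispose of it via the pointwise decay $x^{\gamma-1}v(x)^2\le\frac{1}{1-\gamma}\int_0^x t^\gamma v_x^2\,dt\to0$; and the absorption step cannot be done naively without knowing the left-hand side is finite, which you circumvent by working on $[\varepsilon,1]$ where $A_\varepsilon<\infty$ and solving the quadratic inequality before letting $\varepsilon\to0$. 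Both proofs yield the same constant; your splitting $x^{\gamma-1}vv_x=\bigl(x^{(\gamma-2)/2}v\bigr)\bigl(x^{\gamma/2}v_x\bigr)$ plays the same optimizing role as the paper's choice $\delta=\frac{\gamma+1}{2}$. In short, the paper's argument is shorter and structurally more robust, while yours is more self-contained in spirit (no Fubini) at the cost of the truncation machinery.
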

\begin{proof}
This result was proved by Cannarsa et al. in \cite{bouss} for $\gamma \in (0,1)$, but by a careful examination of the proof one can see that it remains valid for all $\gamma <1$. In fact let $\gamma <1$ and $\delta= \frac{\gamma+1}{2}$. Using Holder inequality and Fubini's theorem one has
\begin{align}
 \int_0^1x^{\gamma-2}v^2dx & =  \int_0^1x^{\gamma-2}\left(\int_0^x y^{\delta/2}v'(y) y^{-\delta/2}dy\right)^2  dx \nonumber \\
   & \leq  \int_0^1x^{\gamma-2}\left(\int_0^x y^{\delta}|v'(y)|^2dy\right)\left( \int_0^x y^{-\delta}dy\right)  dx \nonumber \\
   &= \frac{1}{1-\delta} \int_0^1 \int_0^x  x^{\gamma-\delta-1} y^{\delta}|v'(y)|^2dy  dx  \nonumber \\
   &= \frac{1}{1-\delta} \int_0^1
   \left(\int_y^1  x^{\gamma-\delta-1}dx \right)
    y^{\delta}|v'(y)|^2  dy \nonumber \\
   & \leq  \frac{1}{(1-\delta)(\delta-\gamma)} \int_0^1  y^{\gamma}|v'(y)|^2  dy  \nonumber \\
   & =  \frac{4}{(1-\gamma)^2} \int_0^1  y^{\gamma}|v'(y)|^2  dy.  \nonumber
\end{align}
This ends the proof.
\end{proof}
\section{Conclusion}
In this paper, we studied the null controllability of linear degenerate systems with two different coefficients diffusion not necessarily of the cascade form. We developed new Carleman estimates. By a standard linearization argument and fixed point, see  \cite{bahm}, \cite{bouss}, \cite{Can-Gen-2006}, \cite{Zuazua99}, one can show easily the null controllability of semilinear degenerate coupled systems with two different diffusion coefficients. In this paper we studied coupled system of two weakly degenerate equations. The cases when one of the equation is strongly degenerate systems are open.

\vspace{4cm}

E. M. Ait Ben Hassi, D\'epartement de Math\'ematiques, Facult\'e des Sciences Semlalia, LMDP, UMMISCO (IRD-UPMC), Universit\'e Cadi Ayyad, Marrakech 40000, B.P. 2390, Morocco.
\\
 E-mail: m.benhassi@ucam.ac.ma
 \\[,2cm]
 F. Ammar khodja, Laboratoire de Math\'ematiques de Besan\c{c}on, UMR CNRS 6623, Universit\'e de Franche-
Comt\'e, 25030 Besan\c{c}on Cedex, France.
\\
 E-mail:  farid.ammar-khodja@univ-fcompte.fr
 \\[,2cm]
A. Hajjaj, D\'epartement de Math\'ematiques et Informatique, Facult\'e des Sciences et Techniques, Universit\'e Hassan 1er Settat 26000, B.P. 577,  Morocco.
\\
E-mail: hajjaj$\_{}$fsts@yahoo.fr
\\[,2cm]
L. Maniar, D\'epartement de Math\'ematiques, Facult\'e des Sciences Semlalia, LMDP, UMMISCO (IRD-UPMC), Universit\'e Cadi Ayyad, Marrakech 40000, B.P. 2390, Morroco.
\\
E-mail: maniar@ucam.ac.ma

\end{document}